\newtheorem{lemma}{Lemma}
\newtheorem{theorem}{Theorem}
\newtheorem{remark}{Remark}
\journal{}
\begin{document}

\begin{frontmatter}

%% Title, authors and addresses

%% use the tnoteref command within \title for footnotes;
%% use the tnotetext command for theassociated footnote;
%% use the fnref command within \author or \address for footnotes;
%% use the fntext command for theassociated footnote;
%% use the corref command within \author for corresponding author footnotes;
%% use the cortext command for theassociated footnote;
%% use the ead command for the email address,
%% and the form \ead[url] for the home page:
%% \title{Title\tnoteref{label1}}
%% \tnotetext[label1]{}
%% \author{Name\corref{cor1}\fnref{label2}}
%% \ead{email address}
%% \ead[url]{home page}
%% \fntext[label2]{}
%% \cortext[cor1]{}
%% \address{Address\fnref{label3}}
%% \fntext[label3]{}

\title{Existence of smooth solutions of multi-term Caputo-type fractional differential equations}

\author[Firstaddress,Secondaddress]{Chung-Sik Sin}
\cortext[mycorrespondingauthor]{Corresponding author}
\ead{chongsik@163.com}

\author[Firstaddress]{Shusen Cheng}
\author[Secondaddress]{Gang-Il Ri}
\author[Secondaddress]{Mun-Chol Kim}

\address[Firstaddress]{School of Metallurgy and Ecology Engineering, University of Science and Technology Beijing, Beijing, 30Xueyuan Road, Haidian District,  China}
\address[Secondaddress]{Faculty of Mathematics, \textit{\textbf {Kim Il Sung}} University, Kumsong Street, Taesong District, Pyongyang, D.P.R.KOREA}

\begin{abstract}
This paper deals with the initial value problem for the multi-term fractional differential equation. The fractional derivative is defined in the Caputo sense. Firstly the initial value problem is transformed into a equivalent Volterra-type integral equation under appropriate assumptions.
Then new existence results for smooth solutions are established by using the Schauder fixed point theorem.
\end{abstract}

\begin{keyword}
Caputo  fractional derivative, initial value problem, multi-term fractional differential equation, existence of solution
\end{keyword}
\end{frontmatter}

%% \linenumbers

%% main text
\section{Introduction}\label{sec:1}

 In the present paper, we consider multi-term Caputo-type fractional differential equations of the form
 \begin{equation}\label{multi_term_equation}
 D^{\alpha_n} u(t) =f(t,D^{\alpha_1}u(t),...,D^{\alpha_{n-1}} u(t))
 \end{equation}
 subject to initial conditions
 \begin{equation}\label{multi_term_condition}
 u^{(i)}(0)=u_0^{(i)},  i=0,1,...,\lceil \alpha_n \rceil-1,
 \end{equation}
 where $ \alpha_n>\alpha_{n-1}>...>\alpha_1 \geq 0 $ and the symbol $ D^{\beta} $ denotes the Caputo-type fractional differential operator defined by (\cite{DieBoo,KilSri})
 \begin{equation}
 \nonumber
 D^\beta u=J^{\lceil \beta \rceil-\beta}u^{(\lceil \beta \rceil)}.
 \end{equation}
 Here $ J^\gamma $ is the Riemann-Liouville integral operator of order $ \gamma \geq 0 $ defined by $ J^0 $ being the identity operator and
 \begin{equation}
 \nonumber
 J^\delta u(t)={\frac{1}{\Gamma (\delta) } \int^{t}_{0}{(t-s)}^{\delta-1} u(s) ds}
 \end{equation}
 for $ \delta>0 $.

 Fractional differential equations have excited a great deal of interest in areas such as porous media, plasma dynamics, thermodynamics, cosmic rays, continuum mechanics, biological systems, electrodynamics, quantum mechanics (see \cite{Hilfer,KilSri,Mainardi_book,Mai,Pod,Sin2,Uchaikin}). Especially the relaxation modulus and creep compliance of the multi-term fractional constitutive model proposed in \cite{Bagley} which describe the linear viscoelastic behaviour are obtained from multi-term fractional differential equations \cite{Mainardi_book}.

 In general, it is very difficult to obtain analytical solutions of fractional differential equations.
 Although an analytical expression of solutions of initial value problems of linear differential equations with constant coefficients and Caputo derivative is given in the paper \cite{LucGor}, it is quite cumbersome to handle.
 The predictor-corrector method is one of powerful tools for obtaining numerical solutions of fractional differential equations (see \cite{DieFor1,DieFor3,YangLiu}).
 Diethelm et al. \cite{DieFor2} and Yang et al. \cite{YangLiu} proved that the initial value problem (\ref{multi_term_equation})-(\ref{multi_term_condition}) is equivalent to a fractional differential system when the solution $ u $ is in $ C^{\lceil \alpha_n \rceil}[0,L] $. Based on the equivalence theorems, the authors used the predictor-corrector method to obtain numerical solutions of the initial value problem (\ref{multi_term_equation})-(\ref{multi_term_condition}).
  Thus it is significant to study the existence of $ \lceil \alpha_n \rceil  $ times continuously differentiable solutions of the initial value problem (\ref{multi_term_equation})-(\ref{multi_term_condition}).

 Analytical properties of fractional differential equations can be investigated by considering equivalent Volterra-type integral equations
 (see \cite{DieBoo,DieSmo,DieFor1,KilSri,Kos,Sin}).
 Diethelm et al. \cite{DieSmo} studied the smoothness of solutions of single-term Caputo-type differential equations.
 The paper \cite{Kos} studied the existence of  $ \lceil \alpha_2 \rceil $ times continuously differentiable solutions to two-term fractional differential equations of the form
 \begin{equation}\label{two_term_equation}
 D^{\alpha_2} u(t) =f(t,D^{\alpha_1}u(t))
 \end{equation}
 subject to initial conditions
 \begin{equation}\label{two_term_condition}
 u^{(i)}(0)=u_0^{(i)},  i=0,1,...,\lceil \alpha_2 \rceil-1.
 \end{equation}
 The following lemmas is essential in \cite{Kos}.

 \begin{lemma} [\cite{Kos}]\label{incorrect_lemma1}
 Let $  \alpha_1, \alpha_2 \notin N $ and $ \lceil \alpha_1 \rceil  < \lceil \alpha_2 \rceil $. Suppose that $ f(0,0)=0, f(t,0) \neq 0 $ on a compact subinterval of $(0,1]$ and $ f:[0,1]\times R \rightarrow R $ is continuously differentiable.
 Then a function $ u \in C^{\lceil \alpha_2 \rceil}[0,1] $ is a solution of the initial value problem (\ref{two_term_equation})-(\ref{two_term_condition}) if and only if
 \begin{equation} \label{incorrect_lemma1_eq1}
 u(t)=\sum^{ \lceil \alpha_1 \rceil -1}_{i=0}\frac{t^i}{i!}u^{(i)}_0
 +\int^t_0 \frac{{(t-s)}^{ \lceil \alpha_1 \rceil-1}}{(\lceil \alpha_1 \rceil -1)!}v(s)ds,
 t \in [0,1],
 \end{equation}
 where $ v \in C[0,1] $ is a solution of the integral equation
 \begin{align}
 \nonumber
 v(t)&=\sum^{\lceil \alpha_2 \rceil-\lceil \alpha_1 \rceil-1}_{i=0}\frac{t^i}{i!}u^{(i+\lceil \alpha_1 \rceil)}_0+\int^t_0 \frac {{(t-s)}^{ \alpha_2-\lceil \alpha_1 \rceil-1}}{\Gamma(\alpha_2-\lceil \alpha_1 \rceil)}
 f\biggl (s,\frac {1}{\Gamma(\lceil \alpha_1 \rceil-\alpha_1)}\\
 \label{incorrect_lemma1_eq2}
 &\int^s_0 {(s-w)}^{\lceil \alpha_1 \rceil-\alpha_1-1}v(w)dw \biggr)ds.
 \end{align}
 \end{lemma}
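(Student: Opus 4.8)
The plan is to use $v:=u^{(\lceil\alpha_1\rceil)}$ as the object linking the differential problem (\ref{two_term_equation})--(\ref{two_term_condition}) with the integral equation (\ref{incorrect_lemma1_eq2}), proving the two implications separately. Throughout I would use the semigroup law $J^aJ^b=J^{a+b}$, the fact that for $\gamma>0$ the operator $J^\gamma$ maps $C[0,1]$ into $C[0,1]$ with $J^\gamma h(0)=0$, and the elementary identity $J^mw^{(m+k)}(t)=w^{(k)}(t)-\sum_{j=0}^{m-1}\frac{t^j}{j!}w^{(k+j)}(0)$ valid for every integer $m\ge1$.

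\emph{Direction ``$\Rightarrow$''.} Let $u\in C^{\lceil\alpha_2\rceil}[0,1]$ solve (\ref{two_term_equation})--(\ref{two_term_condition}). Since $\lceil\alpha_1\rceil<\lceil\alpha_2\rceil$, $v=u^{(\lceil\alpha_1\rceil)}$ lies in $C^{\lceil\alpha_2\rceil-\lceil\alpha_1\rceil}[0,1]\subset C[0,1]$, and Taylor's formula with integral remainder of order $\lceil\alpha_1\rceil$ is exactly (\ref{incorrect_lemma1_eq1}). Because $\alpha_2>\lceil\alpha_2\rceil-1\ge\lceil\alpha_1\rceil$ and $\alpha_2\notin N$, one has $\alpha_2-\lceil\alpha_1\rceil>0$; applying $J^{\alpha_2-\lceil\alpha_1\rceil}$ to $D^{\alpha_2}u(t)=f(t,D^{\alpha_1}u(t))$, using $D^{\alpha_2}u=J^{\lceil\alpha_2\rceil-\alpha_2}u^{(\lceil\alpha_2\rceil)}$ and $D^{\alpha_1}u=J^{\lceil\alpha_1\rceil-\alpha_1}v$, then the semigroup law and the integer identity above, and finally inserting the data (\ref{two_term_condition}), produces (\ref{incorrect_lemma1_eq2}). (Note $f(0,0)=0$ is automatically necessary, since $D^{\alpha_2}u(0)=0=f(0,D^{\alpha_1}u(0))=f(0,0)$.) This half is routine.

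\emph{Direction ``$\Leftarrow$''.} Suppose $u\in C^{\lceil\alpha_2\rceil}[0,1]$ is given by (\ref{incorrect_lemma1_eq1}) with $v\in C[0,1]$ solving (\ref{incorrect_lemma1_eq2}). Differentiating (\ref{incorrect_lemma1_eq1}) $\lceil\alpha_1\rceil$ times gives $v=u^{(\lceil\alpha_1\rceil)}$, hence in fact $v\in C^{\lceil\alpha_2\rceil-\lceil\alpha_1\rceil}[0,1]$; the conditions (\ref{two_term_condition}) for $i<\lceil\alpha_1\rceil$ follow from (\ref{incorrect_lemma1_eq1}), and those for $\lceil\alpha_1\rceil\le i\le\lceil\alpha_2\rceil-1$ by differentiating (\ref{incorrect_lemma1_eq2}) and using $J^\gamma h(0)=0$. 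Reversing the fractional-integral computation of the first direction (legitimate here because $v$ is now known to be smooth enough) yields $D^{\alpha_2}u=f(t,D^{\alpha_1}u)$. Thus, once membership $u\in C^{\lceil\alpha_2\rceil}[0,1]$ is granted, the equivalence is straightforward.

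The substantive question — and the one this $C^{\lceil\alpha_2\rceil}$ hypothesis conceals, since in applications one produces $v\in C[0,1]$ from a fixed-point argument on (\ref{incorrect_lemma1_eq2}) and then \emph{wishes} to conclude $u\in C^{\lceil\alpha_2\rceil}[0,1]$ — is exactly whether a continuous solution $v$ of (\ref{incorrect_lemma1_eq2}) makes $u$ defined by (\ref{incorrect_lemma1_eq1}) lie in $C^{\lceil\alpha_2\rceil}[0,1]$. This is where I expect the main obstacle. Defining $u$ by (\ref{incorrect_lemma1_eq1}) gives only $u\in C^{\lceil\alpha_1\rceil}[0,1]$ for free; writing (\ref{incorrect_lemma1_eq2}) as $v=P+J^{\alpha_2-\lceil\alpha_1\rceil}g$ with $P$ a polynomial and $g(s)=f(s,J^{\lceil\alpha_1\rceil-\alpha_1}v(s))$ continuous, $g(0)=0$, and setting $m:=\lceil\alpha_2\rceil-\lceil\alpha_1\rceil$, $\theta:=\lceil\alpha_2\rceil-\alpha_2\in(0,1)$, $\theta':=\lceil\alpha_1\rceil-\alpha_1\in(0,1)$, one has $\alpha_2-\lceil\alpha_1\rceil=(m-1)+(1-\theta)$, so $v$ is automatically $C^{m-1}$ with $v^{(m-1)}=P^{(m-1)}+J^{1-\theta}g$. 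Since $g$ is merely continuous, $J^{1-\theta}g$ is in general only H\"older of order $1-\theta$, so the continuity of $v^{(m)}=P^{(m)}+\frac{d}{dt}J^{1-\theta}g$ must be wrung out of the behaviour of $g$ near $0$. From $J^{\theta'}v(s)=\frac{v(0)}{\Gamma(\theta'+1)}s^{\theta'}+o(s^{\theta'})$ and a first-order expansion of the $C^1$ function $f$ at $(0,0)$ one gets $g(s)\sim\partial_y f(0,0)\,\frac{v(0)}{\Gamma(\theta'+1)}\,s^{\theta'}$ when that coefficient is nonzero (else $g(s)\sim\partial_t f(0,0)\,s$); in the first case $J^{1-\theta}g(s)\sim c\,s^{(1-\theta)+\theta'}$, so $\frac{d}{dt}J^{1-\theta}g$ behaves like $s^{\theta'-\theta}$ near $0$, which stays bounded and continuous up to $0$ \emph{only if} $\theta'\ge\theta$, i.e. only if the fractional part of $\alpha_1$ does not exceed that of $\alpha_2$. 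I therefore expect the bootstrap — and with it the way the lemma is invoked for existence — to fail in general, and the real content of a correct theory to be the identification of the extra compatibility condition on the fractional parts of $\alpha_1,\alpha_2$ (together with how $\partial_y f(0,0)$ interacts with $v(0)=u_0^{(\lceil\alpha_1\rceil)}$) under which $J^{1-\theta}g\in C^1[0,1]$. Once that is in hand, splitting $g$ into its leading singular term plus a strictly higher-order remainder, differentiating $J^{\alpha_2-\lceil\alpha_1\rceil}g$ exactly $m$ times via $\frac{d}{dt}J^\mu h=J^{\mu-1}h$ (for $\mu>1$) plus a direct estimate at the last step, and checking continuity of $v^{(m)}$ at $0$ is technical but standard. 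The hypothesis $f(t,0)\ne0$ on a subinterval of $(0,1]$ plays only a secondary non-degeneracy role and does not by itself resolve the exponent obstruction.
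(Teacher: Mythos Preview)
Your instinct is exactly right, and it matches the paper's conclusion: this lemma is \emph{false} as stated. The paper does not attempt to prove Lemma~\ref{incorrect_lemma1}; it quotes it from \cite{Kos} and then disproves it by an explicit counterexample. With $\alpha_1=1.8$, $\alpha_2=2.2$, $u_0^{(i)}=0$, and a specific $f$, the paper exhibits $v(t)=t^{0.6}\in C[0,1]$ solving (\ref{incorrect_lemma1_eq2}), for which (\ref{incorrect_lemma1_eq1}) yields $u(t)=c\,t^{2.6}\notin C^{3}[0,1]$. So the intended reading of the lemma --- start from $v\in C[0,1]$ and conclude $u\in C^{\lceil\alpha_2\rceil}$ --- is precisely what fails, confirming your analysis of the ``substantive question''.

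Methodologically your route differs from the paper's. You argue by exponent-counting near $t=0$: with $\theta=\lceil\alpha_2\rceil-\alpha_2$ and $\theta'=\lceil\alpha_1\rceil-\alpha_1$ you find $\frac{d}{dt}J^{1-\theta}g\sim s^{\theta'-\theta}$ when the leading coefficient is nonzero, hence an obstruction when $\theta'<\theta$. The paper instead produces one concrete example (where indeed $\theta'=0.2<0.8=\theta$) and leaves it at that. Your approach is more informative about \emph{why} the failure occurs; the paper's is more decisive as a disproof. Note, however, that your heuristic isolates the obstruction at the leading term $\partial_yf(0,0)\,v(0)$, while in the paper's counterexample $v(0)=0$ and $f$ is not $C^1$ at the origin, so the failure there comes from the non-smooth structure of $f$ rather than from your leading-order mechanism; the two diagnoses are complementary rather than identical.

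Your proposed remedy --- a compatibility condition on the fractional parts together with $u_0^{(\lceil\alpha_1\rceil)}$ --- is in the right spirit but not quite what the paper adopts. The corrected statement (Theorem~\ref{th2-1}) imposes $u_0^{(\lceil\alpha_1\rceil)}=0$ \emph{and} the gap condition $\lceil\alpha_1\rceil+1<\alpha_2$ (so $\alpha_2-\lceil\alpha_1\rceil>1$, i.e.\ your $m\ge2$), rather than $\theta'\ge\theta$. The point of the gap condition is that the kernel exponent $\alpha_2-\lceil\alpha_1\rceil-1$ is then positive, so (\ref{incorrect_lemma1_eq2}) immediately gives $v\in C^1$ from $v\in C$, breaking the bootstrap circularity you identified; Lemma~\ref{le2-2} (which needs $g\in C^1$ and $g(0)=0$) then closes the argument. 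Your condition $\theta'\ge\theta$ alone would not supply the extra derivative of $g$ that Lemma~\ref{le2-2} requires.
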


 \begin{lemma}[\cite{Kos}] \label{incorrect_lemma2}
 Let $  \alpha_1, \alpha_2 \notin N $ and $ \lceil \alpha_1 \rceil  = \lceil \alpha_2 \rceil $. Suppose that $ f(0,0)=0, f(t,0) \neq 0 $ on a compact subinterval of $(0,1]$ and $ f:[0,1]\times R \rightarrow R $ is continuously differentiable.
 Then a function $ u \in C^{\lceil \alpha_2 \rceil}[0,1] $ is a solution of the initial value problem (\ref{two_term_equation})-(\ref{two_term_condition}) if and only if
 \begin{equation}\label{incorrect_lemma2_eq1}
 u(t)=\sum^{ \lceil \alpha_1 \rceil -1}_{i=0}\frac{t^i}{i!}u^{(i)}_0
 +\frac{1}{\Gamma(\alpha_1)} \int^t_0 {(t-s)}^{ \alpha_1 -1}v(s)ds, t \in [0,1],
 \end{equation}
 where $ v \in C[0,1] $ is a solution of the integral equation
 \begin{equation}\label{incorrect_lemma2_eq2}
  v(t)=\frac{1}{\Gamma(\alpha_2- \alpha_1)}\int^t_0 {(t-s)}^{ \alpha_2- \alpha_1 -1}f(s,v(s)).
 \end{equation}
 \end{lemma}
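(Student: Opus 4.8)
The plan is to reduce the two-term problem (\ref{two_term_equation})--(\ref{two_term_condition}) to the single weakly-singular Volterra equation (\ref{incorrect_lemma2_eq2}) through the substitution $v:=D^{\alpha_1}u=J^{m-\alpha_1}u^{(m)}$, where I write $m:=\lceil\alpha_1\rceil=\lceil\alpha_2\rceil$ (so $m-1<\alpha_1<\alpha_2<m$). The only tools needed are the semigroup identity $J^{a}J^{b}=J^{a+b}$ for $a,b\ge 0$ on $C[0,1]$, Taylor's formula with integral remainder $w=\sum_{i=0}^{m-1}\frac{t^{i}}{i!}w^{(i)}(0)+J^{m}w^{(m)}$ for $w\in C^{m}[0,1]$, and the definition of the Caputo operator.

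For the ``only if'' direction I would argue as follows. Given $u\in C^{m}[0,1]$ solving (\ref{two_term_equation})--(\ref{two_term_condition}), put $v:=D^{\alpha_1}u=J^{m-\alpha_1}u^{(m)}$; since $u^{(m)}\in C[0,1]$ and $m-\alpha_1>0$ this $v$ belongs to $C[0,1]$. Taylor's formula and the semigroup law give $u=\sum_{i=0}^{m-1}\frac{t^{i}}{i!}u_{0}^{(i)}+J^{m}u^{(m)}=\sum_{i=0}^{m-1}\frac{t^{i}}{i!}u_{0}^{(i)}+J^{\alpha_1}(J^{m-\alpha_1}u^{(m)})=\sum_{i=0}^{m-1}\frac{t^{i}}{i!}u_{0}^{(i)}+J^{\alpha_1}v$, which is (\ref{incorrect_lemma2_eq1}). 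Equation (\ref{two_term_equation}) reads $J^{m-\alpha_2}u^{(m)}=D^{\alpha_2}u=f(\cdot,v)$; applying $J^{\alpha_2-\alpha_1}$ and using the semigroup law again yields $v=J^{m-\alpha_1}u^{(m)}=J^{\alpha_2-\alpha_1}f(\cdot,v)$, i.e. (\ref{incorrect_lemma2_eq2}).

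For the ``if'' direction I would take $v\in C[0,1]$ solving (\ref{incorrect_lemma2_eq2}) and define $u$ by (\ref{incorrect_lemma2_eq1}); by (\ref{incorrect_lemma2_eq2}) and the semigroup law, $u=\sum_{i=0}^{m-1}\frac{t^{i}}{i!}u_{0}^{(i)}+J^{\alpha_2}g$ with $g:=f(\cdot,v)$, and since $\alpha_2-\alpha_1>0$ forces $v(0)=0$ the hypothesis $f(0,0)=0$ gives $g(0)=0$. Assuming for the moment that $g\in C^{1}[0,1]$, the rest is straightforward manipulation of $J$ and $D$: with $\delta:=\alpha_2-m+1\in(0,1)$ one has $u^{(m)}=(J^{\alpha_2}g)^{(m)}=\frac{d}{dt}J^{\delta}g=J^{\delta}g'\in C[0,1]$ (the boundary term $\frac{t^{\delta-1}}{\Gamma(\delta)}g(0)$ vanishes because $g(0)=0$), so $u\in C^{m}[0,1]$ and $u^{(i)}(0)=u_{0}^{(i)}$ for $i=0,\dots,m-1$; then $D^{\alpha_1}u=J^{m-\alpha_1}u^{(m)}=J^{\alpha_2-\alpha_1}g=v$, hence $D^{\alpha_2}u=J^{m-\alpha_2}u^{(m)}=J^{1}g'=g=f(\cdot,D^{\alpha_1}u)$, so $u$ solves (\ref{two_term_equation})--(\ref{two_term_condition}).

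The real work — and the step I expect to be the main obstacle — is to prove $g=f(\cdot,v)\in C^{1}[0,1]$. A priori $g$ is only continuous, and $J^{\alpha_2}$ of a merely continuous function need not be $m$ times continuously differentiable at $t=0$ (already $J^{\alpha_2}1$ fails), so the smoothness is not automatic from $v\in C[0,1]$. I would recover it by a bootstrap on the H\"older regularity of $v$ driven by (\ref{incorrect_lemma2_eq2}): set $\gamma:=\alpha_2-\alpha_1\in(0,1)$; from $v=J^{\gamma}g$ with $g$ bounded, $v$ is H\"older continuous of exponent $\gamma$ with $v(0)=0$, and since $f$ is $C^{1}$ (hence locally Lipschitz) $g=f(\cdot,v)$ is H\"older of exponent $\gamma$ with $g(0)=0$; substituting back and invoking the classical mapping property of $J^{\gamma}$ on H\"older functions that vanish at $0$ (each application raises the exponent by $\gamma$, and once the exponent reaches $1$ the function is Lipschitz, hence absolutely continuous, so that $(J^{\gamma}h)'=J^{\gamma}h'\in C[0,1]$), the regularity of $v$ improves by $\gamma$ at every step, so after finitely many iterations $v\in C^{1}[0,1]$ with $v(0)=0$, and therefore $g=f(\cdot,v)\in C^{1}[0,1]$ with $g(0)=0$, as required. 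Note that $f(0,0)=0$ is used at each iteration precisely to keep $g$ vanishing at the origin; without it $J^{\gamma}g$ already fails to be Lipschitz there, and the equivalence collapses.
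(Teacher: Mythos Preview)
You should be aware that the paper does \emph{not} prove this lemma; it quotes it from \cite{Kos} solely in order to argue that it is \emph{false}. In Section~2 the authors present the following counterexample to the ``if'' direction: take $\alpha_1=1.4$, $\alpha_2=1.5$ (so $m=2$), zero initial data, and
\[
f(t,s)=\frac{\Gamma(1.2)}{2\Gamma(1.1)}\bigl(t^{0.1}+s^{0.5}\bigr).
\]
One checks directly that $v(t)=t^{0.2}$ solves the auxiliary equation $v=J^{\alpha_2-\alpha_1}f(\cdot,v)$, whence $u(t)=J^{\alpha_1}v(t)=\frac{\Gamma(1.2)}{\Gamma(2.6)}\,t^{1.6}\notin C^{2}[0,1]$. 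So, from the paper's point of view, there is no proof to compare your attempt with: the claimed equivalence is asserted to break down exactly at the step you flagged as ``the main obstacle'', namely the passage from a continuous solution $v$ of (\ref{incorrect_lemma2_eq2}) back to a $C^{m}$ function $u$.

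That said, there is a genuine tension worth noting. The paper's $f$ is \emph{not} continuously differentiable on $[0,1]\times\mathbb{R}$ (both $t\mapsto t^{0.1}$ and $s\mapsto s^{0.5}$ have unbounded derivatives at the origin), so the counterexample does not, strictly speaking, meet the stated hypotheses of Lemma~\ref{incorrect_lemma2}. Your bootstrap uses the $C^{1}$ hypothesis on $f$ in an essential way---it is what makes $g=f(\cdot,v)$ inherit H\"older regularity from $v$ and what lets you differentiate $g$ once $v\in C^{1}$---and for a genuinely $C^{1}$ right-hand side your sketch may well be completable (the delicate point being the Hardy--Littlewood step from $g\in C^{0,\beta}$, $g(0)=0$, $\beta+\gamma>1$, to $J^{\gamma}g\in C^{1}$, which you invoke without proof). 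In any case, the paper's message is that \cite{Kos} asserted the equivalence without any such regularity argument, and that a bare continuous solution of (\ref{incorrect_lemma2_eq2}) is not enough; the corrected equivalence theorems in Section~2 of the paper impose stronger structural conditions (e.g.\ $\lceil\alpha_{n-1}\rceil+1<\alpha_n$) precisely to avoid this issue.
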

 Based on Lemma \ref{incorrect_lemma1} and Lemma \ref{incorrect_lemma2}, the authors \cite{Deng2,DengMa} established the existence and uniqueness of solutions to the initial value problem  (\ref{two_term_equation})-(\ref{two_term_condition}) on the interval [0,1].

 The present paper is organized as follows.
 In Section 2, we transform the initial value problem (\ref{multi_term_equation})- (\ref{multi_term_condition}) into a equivalent Volterra-type integral equation under proper assumptions.
 In particular, it is prove that Lemma \ref{incorrect_lemma1} and Lemma \ref{incorrect_lemma2} are incorrect.
 With the help of Section 2, the correct existence results for smooth solutions to the initial value problem (\ref{multi_term_equation})- (\ref{multi_term_condition}) are established in Section 3.

%%%%%%%%%%%%%%%%%%%%%%%%%%%%%%%%%%%%%%%%%%%%%%%%%%%%%%%%%%%%%%%%%%%%%%%%%%%%%%%%%%%%%%%%%%%%%%%%%%%%%%%%%%%%%%%%%%%%%%%%%%%%%%%%%%%%%%%

\section{Equivalent integral equations}\label{sec:2}

 In this section it is proved that the solvability of the initial value problem (\ref{multi_term_equation})- (\ref{multi_term_condition}) is equivalent to that of a Volterra-type integral equation.
 \begin{lemma}[\cite{LiDeng}]\label{le2-1}
 Let $ I>0 $ and assume that $ u \in C^m[0,I] $ and $ m-1<\beta<\rho <m $. Then, for all k $ \in\{1,...,m-1\} $,
 $ D^{\rho-m+k}u^{(m-k)}(t)=D^{\rho}u(t)$ and $D^{\rho-\beta}{D^{\beta}}u(t)=D^{\rho}u(t)$.
 \end{lemma}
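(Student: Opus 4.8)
The plan is to reduce both identities to the definition $D^{\gamma}=J^{\lceil\gamma\rceil-\gamma}(\,\cdot\,)^{(\lceil\gamma\rceil)}$, to the semigroup law $J^{a}J^{b}=J^{a+b}$ for $a,b>0$, and to the fact that the Caputo derivative is a left inverse of the Riemann--Liouville integral. First I would record the ceiling bookkeeping: since $m-1<\beta<\rho<m$, neither $\beta$ nor $\rho$ is an integer and $\lceil\beta\rceil=\lceil\rho\rceil=m$, so $D^{\rho}u=J^{m-\rho}u^{(m)}$; moreover $\rho-\beta\in(0,1)$, and for $k\in\{1,\dots,m-1\}$ one has $k-1<\rho-m+k<k$ (in particular $\rho-m+k>0$), hence $\lceil\rho-m+k\rceil=k$.

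For the first identity, since $u\in C^{m}[0,I]$ the function $u^{(m-k)}$ lies in $C^{k}[0,I]$ and $(u^{(m-k)})^{(k)}=u^{(m)}\in C[0,I]$, so $D^{\rho-m+k}u^{(m-k)}$ is well defined and a one-line computation from the definition gives
\[
D^{\rho-m+k}u^{(m-k)}=J^{\,k-(\rho-m+k)}(u^{(m-k)})^{(k)}=J^{m-\rho}u^{(m)}=D^{\rho}u .
\]

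For the second identity the key move is to rewrite $D^{\beta}u$ \emph{before} differentiating it. From the definition $D^{\beta}u=J^{m-\beta}u^{(m)}$, and since $m-\beta=(\rho-\beta)+(m-\rho)$ with both exponents positive, the semigroup law applied to the continuous function $u^{(m)}$ gives $D^{\beta}u=J^{\rho-\beta}\bigl(J^{m-\rho}u^{(m)}\bigr)=J^{\rho-\beta}(D^{\rho}u)$. Since $D^{\rho}u=J^{m-\rho}u^{(m)}$ is continuous on $[0,I]$, $J^{\rho-\beta}(D^{\rho}u)$ is well defined and vanishes at $0$; applying $D^{\rho-\beta}$ (note $\lceil\rho-\beta\rceil=1$) and using that the Caputo derivative is a left inverse of the Riemann--Liouville integral, $D^{\rho-\beta}J^{\rho-\beta}h=h$ for continuous $h$, yields $D^{\rho-\beta}D^{\beta}u=D^{\rho}u$. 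The left-inverse identity itself I would justify by noting that $J^{\rho-\beta}h$ vanishes at $0$, so its Caputo derivative of order $\rho-\beta$ coincides with its Riemann--Liouville derivative, and $\frac{d}{dt}J^{1-(\rho-\beta)}J^{\rho-\beta}h=\frac{d}{dt}J^{1}h=h$.

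The step I expect to be the main obstacle is not the algebra but justifying the application of $D^{\rho-\beta}$ in the second identity. A priori $D^{\beta}u=J^{m-\beta}u^{(m)}$ is only H\"older continuous and need not belong to $C^{1}[0,I]$ (already $J^{m-\beta}1=t^{m-\beta}/\Gamma(m-\beta+1)$ has an unbounded derivative at $0$), so the literal reading $D^{\rho-\beta}(D^{\beta}u)=J^{1-(\rho-\beta)}(D^{\beta}u)'$ is not directly covered by the stated definition. The rewriting $D^{\beta}u=J^{\rho-\beta}(D^{\rho}u)$ is precisely what removes the difficulty, since the derivative of $J^{\rho-\beta}(D^{\rho}u)$ has only an integrable singularity at $0$ and the left-inverse identity applies in that setting; I would state this domain convention explicitly, or quote it from \cite{LiDeng,DieBoo,KilSri}, so that the asserted equalities are genuine equalities of continuous functions on $[0,I]$. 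By contrast, the semigroup law is used only on the continuous function $u^{(m)}$, its standard range of validity, and the first identity requires no such care.
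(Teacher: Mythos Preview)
The paper does not supply its own proof of this lemma; it simply quotes the result from \cite{LiDeng}. So there is no argument in the paper to compare against, and your proposal stands on its own.

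Your argument is correct. The first identity is exactly the one-line ceiling bookkeeping you give. For the second identity, your rewriting $D^{\beta}u=J^{\rho-\beta}(D^{\rho}u)$ via the semigroup law, followed by the left-inverse property $D^{\rho-\beta}J^{\rho-\beta}h=h$, is the standard route (and is essentially how \cite{LiDeng} and \cite{DieBoo} handle such compositions). You correctly identify the only genuine subtlety: with the paper's definition $D^{\gamma}v=J^{1-\gamma}v'$ for $0<\gamma<1$, the object $D^{\rho-\beta}(D^{\beta}u)$ is not literally covered, since $(D^{\beta}u)'$ need not be continuous at $0$. Your fix---observing that $J^{\rho-\beta}(D^{\rho}u)$ vanishes at $0$, so its Caputo and Riemann--Liouville derivatives coincide, and then computing $\tfrac{d}{dt}J^{1}h=h$---is exactly the accepted resolution (see, e.g., Theorem~3.7 and Lemma~3.13 in \cite{DieBoo}). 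Stating this domain convention explicitly, as you propose, is the right thing to do.
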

 The following lemma plays an important role in our consideration.
 \begin{lemma}\label{le2-2}
 Let $ I>0 $, $ 0< \beta <1 $, $ g \in C^1[0,I] $ and the function $ F:[0,I] \rightarrow R $
 is defined by
 \begin{equation}
 \nonumber
 F(t)=\int^t_0 {(t-s)}^{-\beta}g(s)ds.
 \end{equation}
 Then $ F(t) \in C^1[0,I] $ if and only if $ g(0)=0 $.
 \end{lemma}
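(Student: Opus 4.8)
The plan is to reduce $F$ to a Riemann--Liouville fractional integral and then to isolate its non-smooth part explicitly, rather than to differentiate $F$ by hand. Note first that $F(t)=\Gamma(1-\beta)\,J^{1-\beta}g(t)$. Since $g\in C^1[0,I]$ we may write $g(s)=g(0)+\int_0^s g'(w)\,dw=g(0)\cdot 1+J^1g'(s)$. Applying $J^{1-\beta}$, using linearity, the elementary identity $J^{1-\beta}1(t)=\dfrac{t^{1-\beta}}{\Gamma(2-\beta)}$, and the semigroup property $J^{1-\beta}J^{1}=J^{2-\beta}$ (valid for continuous functions), I obtain the decomposition
\[
F(t)=\frac{g(0)}{1-\beta}\,t^{1-\beta}+\Gamma(1-\beta)\,J^{2-\beta}g'(t),\qquad t\in[0,I].
\]

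Next I would show that the second term $J^{2-\beta}g'$ belongs to $C^1[0,I]$. It is continuous on $[0,I]$ because the Riemann--Liouville integral of positive order of a continuous function is continuous. Since $2-\beta>1$ and $g'\in C[0,I]$, one may differentiate under the integral sign: the kernel $(t-s)^{1-\beta}$ and its $t$-derivative are dominated by integrable functions, and the boundary term $(t-s)^{1-\beta}g'(s)\big|_{s=t}$ vanishes because $1-\beta>0$. This gives $\dfrac{d}{dt}J^{2-\beta}g'(t)=J^{1-\beta}g'(t)$, which is again continuous on $[0,I]$ by the same regularity fact. Hence $\Gamma(1-\beta)J^{2-\beta}g'\in C^1[0,I]$.

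It remains to analyse the first term. The function $t\mapsto t^{1-\beta}$ is continuous on $[0,I]$ but is \emph{not} in $C^1[0,I]$: for $t>0$ its derivative is $(1-\beta)t^{-\beta}$, which is unbounded as $t\to 0^+$ because $\beta>0$ (indeed $t^{1-\beta}$ is not even differentiable at $0$). If $g(0)=0$ the first term vanishes identically, so $F=\Gamma(1-\beta)J^{2-\beta}g'\in C^1[0,I]$. Conversely, if $F\in C^1[0,I]$, then $\dfrac{g(0)}{1-\beta}\,t^{1-\beta}=F(t)-\Gamma(1-\beta)J^{2-\beta}g'(t)$ is a difference of two functions in $C^1[0,I]$, hence lies in $C^1[0,I]$; since $t^{1-\beta}\notin C^1[0,I]$ this forces $g(0)=0$, completing the equivalence. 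The algebraic splitting is immediate, so the only genuinely delicate points are the two standard-but-technical facts about the singular kernel used above --- continuity of $J^{\gamma}$ applied to a continuous function, and the identity $\frac{d}{dt}J^{\gamma}\phi=J^{\gamma-1}\phi$ for $\gamma>1$ --- together with the easy observation that adding a $C^1$ function cannot repair the failure of differentiability of $t^{1-\beta}$ at the origin; I expect the careful verification that $J^{2-\beta}g'\in C^1[0,I]$ to be the main thing to write out in full.
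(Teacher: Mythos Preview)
Your argument is correct and is essentially the same as the paper's proof. The decomposition you reach,
\[
F(t)=\frac{g(0)}{1-\beta}\,t^{1-\beta}+\frac{1}{1-\beta}\int_0^t (t-s)^{1-\beta}g'(s)\,ds,
\]
is exactly what the paper obtains (by a direct integration by parts rather than via the semigroup identity $J^{1-\beta}J^{1}=J^{2-\beta}$), and the paper then differentiates to get $F'(t)=g(0)\,t^{-\beta}+\int_0^t (t-s)^{-\beta}g'(s)\,ds$ and reads off the conclusion; your more careful discussion of why the second summand lies in $C^1[0,I]$ fills in a detail the paper leaves implicit.
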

 \begin{proof}
 \begin{equation}
 \nonumber
 F(t)=\frac{t^{1-\beta}}{1-\beta}g(0)+\frac{1}{1-\beta}\int^t_0 {(t-s)}^{1-\beta}g^{\prime}(s)ds, t \in [0,I].\\
 \end{equation}
 \begin{equation}
 \nonumber
 F^\prime(t)=t^{-\beta}g(0)+\int^t_0 {(t-s)}^{-\beta}g^{\prime}(s)ds, t \in (0,I].
 \end{equation}
 Thus, $ F^\prime(t) $ is continuous in $ [0,I] $ if and only if $ g(0)=0 $.
 \end{proof}

 \begin{theorem}\label{th2-1}
 Let $ I>0 $, $ \alpha_n, \alpha_{n-1} \notin N $ and $ \lceil \alpha_{n-1} \rceil+1  < \alpha_n $. Suppose that $ f(0,0)=0 , u^{(\lceil \alpha_{n-1} \rceil)}_0=0 $ and $ f:[0,I]\times R \rightarrow R $ is continuously differentiable.
 Then a function $ u \in C^{\lceil \alpha_n \rceil}[0,I] $ is a solution of the initial value problem (\ref{multi_term_equation})- (\ref{multi_term_condition}) if and only if
 \begin{equation}\label{th2-1_eq1}
 u(t)=\sum^{ \lceil \alpha_{n-1} \rceil -1}_{i=0}\frac{t^i}{i!}u^{(i)}_0
 +\int^t_0 \frac{{(t-s)}^{ \lceil \alpha_{n-1} \rceil-1}}{(\lceil \alpha_{n-1} \rceil -1)!}v(s)ds,
 t \in [0,I],
 \end{equation}
 where $ v \in C[0,I] $ is a solution of the integral equation
 \begin{align}
 \nonumber
 &v(t)=\sum^{\lceil \alpha_n \rceil-\lceil \alpha_{n-1} \rceil-1}_{i=0}\frac{t^i}{i!}u^{(i+\lceil \alpha_{n-1} \rceil)}_0
 +\frac {1}{\Gamma(\alpha_n-\lceil \alpha_{n-1} \rceil)}\int^t_0 {(t-s)}^{ \alpha_n-\lceil \alpha_{n-1} \rceil-1}\\
 \label{th2-1_eq2}
 &f\biggl (s,\int^s_0 \frac {{(s-w)}^{\lceil \alpha_{n-1} \rceil-\alpha_1-1}}{\Gamma(\lceil \alpha_{n-1} \rceil-\alpha_1)}v(w)dw,
  ,\cdots,\int^s_0 \frac {{(s-w)}^{\lceil \alpha_{n-1} \rceil-\alpha_{n-1}-1}}{\Gamma(\lceil \alpha_{n-1} \rceil-\alpha_{n-1})}v(w)dw
 \biggr)ds.
 \end{align}
 \end{theorem}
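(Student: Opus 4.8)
The plan is to run the usual reduction built on the substitution $v:=u^{(\lceil\alpha_{n-1}\rceil)}$. Write $m:=\lceil\alpha_{n-1}\rceil$, $p:=\lceil\alpha_n\rceil-m$ and $\nu:=\alpha_n-m$. Since $\alpha_n\notin N$ and $m+1<\alpha_n$, we have $\nu>1$, $\lceil\nu\rceil=p\ge 2$ and $\lceil\alpha_n\rceil-\alpha_n=p-\nu\in(0,1)$; since $\alpha_{n-1}\notin N$ and $\alpha_j\le\alpha_{n-1}$ for $j\le n-1$, we have $\lceil\alpha_j\rceil\le m$ and $0<m-\alpha_j$, so in particular $J^{m-\alpha_j}v(0)=0$. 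Equation (\ref{th2-1_eq1}) is then nothing but Taylor's formula with integral remainder for $u$ at $0$ up to order $m$ combined with (\ref{multi_term_condition}), and it identifies the integral in (\ref{th2-1_eq1}) with $J^m v$; conversely any $u$ of the form (\ref{th2-1_eq1}) satisfies $u^{(m)}=v$, so the passage $u\leftrightarrow v$ is a bijection between the two sides and it suffices to transform the equation.

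For the forward implication, assume $u\in C^{\lceil\alpha_n\rceil}[0,I]$ solves (\ref{multi_term_equation})--(\ref{multi_term_condition}). From $u=\sum_{i=0}^{m-1}\frac{t^i}{i!}u_0^{(i)}+J^m v$ one computes, for $j=1,\dots,n-1$, that $D^{\alpha_j}$ kills the monomials of degree $<\lceil\alpha_j\rceil$ and that $D^{\alpha_j}J^m v=J^{\lceil\alpha_j\rceil-\alpha_j}(J^m v)^{(\lceil\alpha_j\rceil)}=J^{m-\alpha_j}v$; together with the hypothesis on the initial data this gives $D^{\alpha_j}u=J^{m-\alpha_j}v$. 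Lemma \ref{le2-1} (applied with $\rho=\alpha_n$, $k=p$) gives $D^{\alpha_n}u=D^{\nu}u^{(m)}=D^{\nu}v=J^{p-\nu}v^{(p)}=J^{\lceil\alpha_n\rceil-\alpha_n}v^{(p)}$, so (\ref{multi_term_equation}) reads $J^{\lceil\alpha_n\rceil-\alpha_n}v^{(p)}=h$, where $h(t):=f\bigl(t,J^{m-\alpha_1}v(t),\dots,J^{m-\alpha_{n-1}}v(t)\bigr)$ is continuous. Applying $J^{1-\lceil\alpha_n\rceil+\alpha_n}$ (of order $1-(\lceil\alpha_n\rceil-\alpha_n)\in(0,1)$) yields $v^{(p-1)}(t)-v^{(p-1)}(0)=J^{1-\lceil\alpha_n\rceil+\alpha_n}h(t)$; integrating $p-1$ further times from $0$, using $v^{(i)}(0)=u_0^{(m+i)}$ and $J^{p-1}J^{1-\lceil\alpha_n\rceil+\alpha_n}=J^{\nu}$, we obtain exactly (\ref{th2-1_eq2}).

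For the converse, let $v\in C[0,I]$ solve (\ref{th2-1_eq2}) and define $u$ by (\ref{th2-1_eq1}); then $u^{(m)}=v$ and $u\in C^m[0,I]$, so the substantive point is $v\in C^{p}[0,I]$, which upgrades $u$ to $C^{\lceil\alpha_n\rceil}$. Write $v=P+J^{\nu}h$ with $P$ a polynomial of degree $p-1$ and $h$ as above. For $0\le k\le p-1$ the derivative $(J^{\nu}h)^{(k)}=J^{\nu-k}h$ is continuous, and $(J^{\nu}h)^{(p)}=(J^{\nu-p+1}h)'$ with $\nu-p+1\in(0,1)$, so by Lemma \ref{le2-2} this last derivative is continuous precisely when $h(0)=0$. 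To invoke Lemma \ref{le2-2} we first need $h\in C^1$, hence $J^{m-\alpha_j}v\in C^1$ for all $j$: this is automatic when $m-\alpha_j\ge 1$, and for the indices with $\lceil\alpha_j\rceil=m$ (in particular $j=n-1$) it follows from Lemma \ref{le2-2} because $v(0)=u_0^{(\lceil\alpha_{n-1}\rceil)}=0$. Then $h(0)=f(0,0)=0$ since every $J^{m-\alpha_j}v$ vanishes at $0$, so $v\in C^{p}[0,I]$. The initial conditions follow from (\ref{th2-1_eq1})--(\ref{th2-1_eq2}) since $(J^{\nu}h)^{(i)}(0)=J^{\nu-i}h(0)=0$ for $0\le i\le\lceil\alpha_n\rceil-1<\alpha_n=m+\nu$, and the equation is recovered by reversing the earlier computation: $D^{\alpha_n}u=D^{\nu}v=D^{\nu}J^{\nu}h=h$ (the polynomial $P$ is annihilated by $D^{\nu}$ and $h(0)=0$), and $D^{\alpha_j}u=J^{m-\alpha_j}v$ as before.

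The main obstacle is the converse, specifically the claim $v\in C^{p}[0,I]$: this is the step where the Caputo smoothness pathology enters, and it is precisely what forces the hypotheses that are absent from Lemmas \ref{incorrect_lemma1}--\ref{incorrect_lemma2}. The assumption $\alpha_n>\lceil\alpha_{n-1}\rceil+1$ makes the inversion of $J^{\lceil\alpha_n\rceil-\alpha_n}$ produce an integral operator $J^{\nu}$ of order $>1$, so that all derivatives of $J^{\nu}h$ up to order $p-1$ are automatically continuous; the assumption $u_0^{(\lceil\alpha_{n-1}\rceil)}=v(0)=0$ keeps $h$ in $C^1$ via Lemma \ref{le2-2}; and $f(0,0)=0$ then forces $h(0)=0$, which by Lemma \ref{le2-2} delivers continuity of $v^{(p)}$. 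With these regularity facts in hand the equivalence reduces to routine manipulation of the operators $J^{\gamma}$ and $D^{\beta}$.
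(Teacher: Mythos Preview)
Your argument is correct and follows essentially the same route as the paper: set $v=u^{(\lceil\alpha_{n-1}\rceil)}$, use Lemma~\ref{le2-1} to rewrite $D^{\alpha_n}u$ as $D^{\nu}v$ and integrate to obtain (\ref{th2-1_eq2}); for the converse, bootstrap $v\in C\Rightarrow v\in C^{1}$ (from $\nu>1$) $\Rightarrow h\in C^{1}$ (Lemma~\ref{le2-2} with $v(0)=u_0^{(\lceil\alpha_{n-1}\rceil)}=0$) $\Rightarrow v\in C^{p}$ (Lemma~\ref{le2-2} with $h(0)=f(0,0)=0$). Your write-up is more explicit than the paper's about the order of these regularity steps and about why each hypothesis is needed, but the skeleton is the same.
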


 \begin{proof}
 By Lemma \ref{le2-1}, we have
 \begin{equation}\label{th2-1_eq3}
 D^{\alpha_n-\lceil \alpha_{n-1} \rceil}u^{(\lceil \alpha_{n-1} \rceil)}(t)=D^{\alpha_n}u(t)=f(t, u(t), D^{\alpha_1}u(t),\cdots, D^{\alpha_{n-1}}u(t)).
 \end{equation}
 Applying the Riemann-Liouville integral $ J^{\alpha_n-\lceil \alpha_{n-1} \rceil} $ for both sides of (\ref{th2-1_eq3}) and making the substitution $ v(t)= u^{(\lceil \alpha_{n-1} \rceil)}(t) $, we obtain (\ref{th2-1_eq1}) and (\ref{th2-1_eq2}).
 In order to prove the converse, let $ v \in C[0,I] $ be a solution of (\ref{th2-1_eq2}).
 By (\ref{th2-1_eq1}), it is easy to see that $ u^{(\lceil \alpha_{n-1} \rceil)} (t)=v(t) $ and $  u^{(j)}(0)=u^{(j)}_0 $ for $ j=0,1,...,\lceil \alpha_{n-1} \rceil $.\\
 Since $ \lceil \alpha_{n-1} \rceil + 1 < \alpha_n $, by (\ref{th2-1_eq2}), we can easily prove that $ v(t) \in C^1 [0,I] $.
 Differentiating (\ref{th2-1_eq2}), we have
 \begin{align}
 \nonumber
 D^j v(t)&=\sum^{\lceil \alpha_n \rceil-\lceil \alpha_{n-1} \rceil-j-1}_{i=0}\frac{t^i}{i!}u^{(j+i+\lceil \alpha_{n-1} \rceil)}_0+\prod^j_{l=1} (\alpha_n-\lceil \alpha_{n-1} \rceil-l) \int^t_0 \frac {{(t-s)}^{ \alpha_n-\lceil \alpha_{n-1} \rceil-1-j}}{\Gamma(\alpha_n-\lceil \alpha_{n-1} \rceil)}\\
 \nonumber
 & f\biggl (s,\int^s_0 \frac {{(s-w)}^{\lceil \alpha_{n-1} \rceil-\alpha_1-1}}{\Gamma(\lceil \alpha_{n-1} \rceil-\alpha_1)}v(w)dw, \cdots,\int^s_0 \frac {{(s-w)}^{\lceil \alpha_{n-1} \rceil-\alpha_{n-1}-1}}{\Gamma(\lceil \alpha_{n-1} \rceil-\alpha_{n-1})}v(w)dw \biggr)ds
 \nonumber
 \end{align}
 and $ D^j v(0)=D^{j+\lceil \alpha_{n-1} \rceil}u(0)=u^{(j+\lceil \alpha_{n-1} \rceil)}_0 $
 for $ j=0,1,...,\lceil \alpha_n \rceil-\lceil \alpha_{n-1} \rceil-1 $.
 \begin{equation}
 \nonumber
 D^{\lceil \alpha_n \rceil-\lceil \alpha_{n-1} \rceil-1} v(t)=u^{(\lceil \alpha_n \rceil-1)}_0 + \prod^{\lceil \alpha_n \rceil-\lceil \alpha_{n-1} \rceil-1}_{l=1} (\alpha_n-\lceil \alpha_{n-1} \rceil-l) \int^t_0 \frac {{(t-s)}^{ \alpha_n-\lceil \alpha_n \rceil}} {\Gamma(\alpha_n-\lceil \alpha_{n-1} \rceil)}\\
 \end{equation}
 \begin{equation}
 \nonumber
 f\biggl (s,\int^s_0 \frac {{(s-w)}^{\lceil \alpha_{n-1} \rceil-\alpha_1-1}}{\Gamma(\lceil \alpha_{n-1} \rceil-\alpha_1)}v(w)dw,
  ,\cdots,\int^s_0 \frac {{(s-w)}^{\lceil \alpha_{n-1} \rceil-\alpha_{n-1}-1}}{\Gamma(\lceil \alpha_{n-1} \rceil-\alpha_{n-1})}v(w)dw
 \biggr)ds.
 \end{equation}
 Since $v(t) \in C^1 [0,1]$ and $u^{(\lceil \alpha_{n-1} \rceil)}_0=0$, by Lemma \ref{le2-2},
 \begin{equation}
 \nonumber
 f\biggl (s,\int^s_0 \frac {{(s-w)}^{\lceil \alpha_{n-1} \rceil-\alpha_1-1}}{\Gamma(\lceil \alpha_{n-1} \rceil-\alpha_1)}v(w)dw,
  ,\cdots,\int^s_0 \frac {{(s-w)}^{\lceil \alpha_{n-1} \rceil-\alpha_{n-1}-1}}{\Gamma(\lceil \alpha_{n-1} \rceil-\alpha_{n-1})}v(w)dw \biggr)
  \in C^1[0,I].
 \end{equation}
 Since $ f(0,0)=0 $, by Lemma \ref{le2-2}, $ D^{\lceil \alpha_n \rceil-\lceil \alpha_{n-1} \rceil-1} v(t) \in C^1[0,I] $.
 Thus $v \in C^{\lceil \alpha_n \rceil-\lceil \alpha_{n-1} \rceil}[0,I]$ and
 $u \in C^{\lceil \alpha_n \rceil}[0,I]$.
 \end{proof}

 \begin{theorem}\label{th2-2}
 Let $I>0, \alpha_{n-1} \in N , \alpha_n \notin N $ and $ \alpha_{n-1} +1 < \alpha_n $.
 Suppose that $ f:[0,I]\times R \rightarrow R $ is continuously differentiable  and $ f(0,u^{(\alpha_{n-1})}_0)=0 $.
 Then a function $ u \in C^{\lceil \alpha_n \rceil}[0,I] $ is a solution of the initial value problem (\ref{multi_term_equation})-(\ref{multi_term_condition}) if and only if
 \begin{equation}
 \nonumber
 u(t)=\sum^{ \alpha_{n-1} -1}_{i=0}\frac{t^i}{i!}u^{(i)}_0
 +\int^t_0 \frac{{(t-s)}^{ \alpha_{n-1}-1}}{(\alpha_{n-1} -1)!}v(s)ds, t \in [0,I],
 \end{equation}
 where $ v \in C[0,I] $ is a solution of the integral equation
 \begin{align}
 \nonumber
 &v(t)=\sum^{\lceil \alpha_n \rceil-\alpha_{n-1}-1}_{i=0}\frac{t^i}{i!}u^{(i+\alpha_{n-1})}_0
 +\int^t_0 \frac {{(t-s)}^{ \alpha_n-\alpha_{n-1}-1}}{\Gamma(\alpha_n-\alpha_{n-1})}\\
 \nonumber
 &f\biggl (s,\int^s_0 \frac {{(s-w)}^{ \alpha_{n-1}-\alpha_1-1}}{\Gamma(\alpha_{n-1}-\alpha_1)}v(w)dw,
 \cdots,\int^s_0 \frac {{(s-w)}^{ \alpha_{n-1}-\alpha_{n-2}-1}}{\Gamma(\alpha_{n-1}-\alpha_{n-2})}v(w)dw, v(s) \biggr)ds.
 \end{align}
 \end{theorem}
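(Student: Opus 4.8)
The plan is to run, almost verbatim, the argument of Theorem~\ref{th2-1}, with $\lceil\alpha_{n-1}\rceil$ everywhere replaced by $\alpha_{n-1}$. The one structural change comes from $\alpha_{n-1}\in N$: now $D^{\alpha_{n-1}}u=u^{(\alpha_{n-1})}$, so the last argument of $f$ in the associated integral equation is $v(s)$ itself rather than a fractional integral of $v$, and the hypothesis $u_0^{(\lceil\alpha_{n-1}\rceil)}=0$ used in Theorem~\ref{th2-1} is no longer at our disposal; its role is taken over by the condition $f(0,u_0^{(\alpha_{n-1})})=0$, which must be read at the actual value $v(0)=u_0^{(\alpha_{n-1})}$.

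For necessity I would start from a solution $u\in C^{\lceil\alpha_n\rceil}[0,I]$ and set $v(t)=u^{(\alpha_{n-1})}(t)$. By Lemma~\ref{le2-1} (with $m=\lceil\alpha_n\rceil$, $\rho=\alpha_n$) one has $D^{\alpha_n-\alpha_{n-1}}v(t)=D^{\alpha_n}u(t)=f(t,D^{\alpha_1}u(t),\dots,D^{\alpha_{n-2}}u(t),v(t))$, each $D^{\alpha_i}u$ being likewise expressed in terms of $v$ as in Theorem~\ref{th2-1}; applying $J^{\alpha_n-\alpha_{n-1}}$ to both sides and using the Caputo identity $J^{\alpha_n-\alpha_{n-1}}D^{\alpha_n-\alpha_{n-1}}v(t)=v(t)-\sum_{i=0}^{\lceil\alpha_n\rceil-\alpha_{n-1}-1}\frac{t^i}{i!}v^{(i)}(0)$ together with $v^{(i)}(0)=u_0^{(i+\alpha_{n-1})}$ yields the stated integral equation for $v$; integrating $v=u^{(\alpha_{n-1})}$ back up and inserting (\ref{multi_term_condition}) gives the formula for $u$.

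For sufficiency I would take $v\in C[0,I]$ solving the integral equation, define $u$ by that formula, and read off $u^{(\alpha_{n-1})}=v$ and $u^{(j)}(0)=u_0^{(j)}$ for $j\le\alpha_{n-1}$. Since $\alpha_n-\alpha_{n-1}-1>0$, the kernel $(t-s)^{\alpha_n-\alpha_{n-1}-1}$ is integrable and $v\in C^1[0,I]$; differentiating the integral equation $j$ times, which is legitimate for $j\le\lceil\alpha_n\rceil-\alpha_{n-1}-1$ because the exponents $\alpha_n-\alpha_{n-1}-1-j$ stay $>-1$, gives $v^{(j)}(0)=u_0^{(j+\alpha_{n-1})}$, hence $u^{(j)}(0)=u_0^{(j)}$ for $j$ up to $\lceil\alpha_n\rceil-1$, and reversing the first step shows $u$ solves (\ref{multi_term_equation}). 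It then remains to show $u\in C^{\lceil\alpha_n\rceil}[0,I]$, i.e. $v\in C^{\lceil\alpha_n\rceil-\alpha_{n-1}}[0,I]$. After differentiating the integral equation $\lceil\alpha_n\rceil-\alpha_{n-1}-1$ times it takes the form $D^{\lceil\alpha_n\rceil-\alpha_{n-1}-1}v(t)=u_0^{(\lceil\alpha_n\rceil-1)}+c\int_0^t(t-s)^{\alpha_n-\lceil\alpha_n\rceil}g(s)\,ds$ with a constant $c$ and $g(s)=f\bigl(s,J^{\alpha_{n-1}-\alpha_1}v(s),\dots,J^{\alpha_{n-1}-\alpha_{n-2}}v(s),v(s)\bigr)/\Gamma(\alpha_n-\alpha_{n-1})$. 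Because $v\in C^1[0,I]$ and $f$ is $C^1$, $g\in C^1[0,I]$; because each $J^{\alpha_{n-1}-\alpha_i}v$ vanishes at $t=0$ while the last slot of $g$ is $v(0)=u_0^{(\alpha_{n-1})}$, the hypothesis gives $g(0)=0$; so Lemma~\ref{le2-2}, applied with $\beta=\lceil\alpha_n\rceil-\alpha_n\in(0,1)$, yields $D^{\lceil\alpha_n\rceil-\alpha_{n-1}-1}v\in C^1[0,I]$, which is exactly $v\in C^{\lceil\alpha_n\rceil-\alpha_{n-1}}[0,I]$, and hence $u\in C^{\lceil\alpha_n\rceil}[0,I]$.

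I expect the regularity step to be the only real obstacle: one must know that the superposition $g$ is $C^1$ right up to the endpoint $t=0$ and that $g(0)=0$, which is precisely the situation Lemma~\ref{le2-2} is tailored to, with the condition $f(0,u_0^{(\alpha_{n-1})})=0$ supplying the vanishing at the origin. The essential difference from Theorem~\ref{th2-1} is that this condition is now evaluated at $v(0)=u_0^{(\alpha_{n-1})}$ rather than at $0$, because the awkward last argument is here $v$ itself (automatically $C^1$) rather than a fractional integral, so no vanishing of $v(0)$ is forced; in Theorem~\ref{th2-1} the corresponding role was played by $u_0^{(\lceil\alpha_{n-1}\rceil)}=0$, needed there to make $J^{\lceil\alpha_{n-1}\rceil-\alpha_{n-1}}v$ belong to $C^1$. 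One point deserving care in the general case $n\ge3$ is the $C^1$-regularity up to $t=0$ of the intermediate fractional integrals $J^{\alpha_{n-1}-\alpha_i}v$ whose order $\alpha_{n-1}-\alpha_i$ lies in $(0,1)$; for $n=2$ no such term appears and the argument is clean.
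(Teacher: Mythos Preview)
Your proposal is correct and follows essentially the same approach as the paper, whose entire proof reads ``Similar to the proof of Theorem~\ref{th2-1}, we can prove this result.'' Your writeup is in fact more careful: you correctly isolate why the hypothesis $f(0,u_0^{(\alpha_{n-1})})=0$ replaces the pair $f(0,0)=0$, $u_0^{(\lceil\alpha_{n-1}\rceil)}=0$ from Theorem~\ref{th2-1} (the last slot of $f$ is now $v$ itself, with $v(0)=u_0^{(\alpha_{n-1})}$ not forced to vanish), and your closing caveat about the $C^1$-regularity of $J^{\alpha_{n-1}-\alpha_i}v$ when $0<\alpha_{n-1}-\alpha_i<1$ for $n\ge 3$ flags a genuine subtlety that the paper does not address.
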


 \begin{proof}
 Similar to the proof of Theorem \ref{th2-1}, we can prove this result.
 \end{proof}

 \begin{theorem}\label{th2-3}
 Let $I>0,  \alpha_{n-1} \notin N, \alpha_n \in N $ and $ \lceil \alpha_{n-1} \rceil+1 \leq \alpha_n $.
 Suppose that $ f:[0,I]\times R \rightarrow R $ is continuous.
 Then a function $ u \in C^{ \alpha_n }[0,I] $ is a solution of the initial value problem (\ref{multi_term_equation})-(\ref{multi_term_condition}) if and only if
 \begin{equation}
 \nonumber
 u(t)=\sum^{ \lceil \alpha_{n-1} \rceil -1}_{i=0}\frac{t^i}{i!}u^{(i)}_0
 +\int^t_0 \frac{{(t-s)}^{ \lceil \alpha_{n-1} \rceil-1}}{(\lceil \alpha_{n-1} \rceil -1)!}v(s)ds, t \in [0,I],
 \end{equation}
 where $ v \in C[0,I] $ is a solution of the integral equation
 \begin{align}
 \nonumber
 &v(t)=\sum^{\alpha_n -\lceil \alpha_{n-1} \rceil-1}_{i=0}\frac{t^i}{i!}u^{(i+\lceil \alpha_{n-1} \rceil)}_0+\int^t_0 \frac {{(t-s)}^{ \alpha_n-\lceil \alpha_{n-1} \rceil-1}}{\Gamma(\alpha_n-\lceil \alpha_{n-1} \rceil)}\\
 \nonumber
 &f\biggl (s,\int^s_0 \frac {{(s-w)}^{\lceil \alpha_{n-1} \rceil-\alpha_1-1}}{\Gamma(\lceil \alpha_{n-1} \rceil-\alpha_1)}v(w)dw,
  ,\cdots,\int^s_0 \frac {{(s-w)}^{\lceil \alpha_{n-1} \rceil-\alpha_{n-1}-1}}{\Gamma(\lceil \alpha_{n-1} \rceil-\alpha_{n-1})}v(w)dw
 \biggr)ds.
 \end{align}
 \end{theorem}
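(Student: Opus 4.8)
The plan is to follow the scheme of the proof of Theorem~\ref{th2-1}, exploiting two simplifications available here. Because $\alpha_n\in N$, the exponent $\alpha_n-\lceil\alpha_{n-1}\rceil$ occurring in the integral equation for $v$ is a positive integer, so $J^{\alpha_n-\lceil\alpha_{n-1}\rceil}$ is an ordinary iterated primitive and the kernel $(t-s)^{\alpha_n-\lceil\alpha_{n-1}\rceil-1}$ carries a non-negative integer power. Hence the converse direction needs no appeal to Lemma~\ref{le2-2}, and mere continuity of $f$ is enough; in particular the conditions $f(0,0)=0$ and $u_0^{(\lceil\alpha_{n-1}\rceil)}=0$ imposed in Theorem~\ref{th2-1} can be dropped.

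For the forward implication I would take $u\in C^{\alpha_n}[0,I]$ solving (\ref{multi_term_equation})--(\ref{multi_term_condition}), put $v=u^{(\lceil\alpha_{n-1}\rceil)}$, and argue as in Theorem~\ref{th2-1}. Taylor's formula at $0$ gives the reconstruction formula $u=\sum_{i=0}^{\lceil\alpha_{n-1}\rceil-1}\frac{t^i}{i!}u_0^{(i)}+J^{\lceil\alpha_{n-1}\rceil}v$, while (now using only the definition of $D^\beta$ and elementary calculus, since $\alpha_n\in N$, in place of Lemma~\ref{le2-1}) equation (\ref{multi_term_equation}) turns into
\[
v^{(\alpha_n-\lceil\alpha_{n-1}\rceil)}(t)=f\Bigl(t,J^{\lceil\alpha_{n-1}\rceil-\alpha_1}v(t),\cdots,J^{\lceil\alpha_{n-1}\rceil-\alpha_{n-1}}v(t)\Bigr).
\]
Integrating this $\alpha_n-\lceil\alpha_{n-1}\rceil$ times and substituting $v^{(i)}(0)=u_0^{(i+\lceil\alpha_{n-1}\rceil)}$ reproduces the stated integral equation for $v$.

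For the converse I would start from a solution $v\in C[0,I]$ of the integral equation. First I would verify that each argument $s\mapsto J^{\lceil\alpha_{n-1}\rceil-\alpha_j}v(s)$ is continuous on the closed interval $[0,I]$ --- a continuous function convolved with the locally integrable kernel $(\cdot)^{\lceil\alpha_{n-1}\rceil-\alpha_j-1}/\Gamma(\lceil\alpha_{n-1}\rceil-\alpha_j)$ --- so that $s\mapsto f(s,J^{\lceil\alpha_{n-1}\rceil-\alpha_1}v(s),\cdots,J^{\lceil\alpha_{n-1}\rceil-\alpha_{n-1}}v(s))$ is continuous as well. Since $\alpha_n-\lceil\alpha_{n-1}\rceil\in N$, the integral equation presents $v$ as a polynomial of degree $\alpha_n-\lceil\alpha_{n-1}\rceil-1$ plus the $(\alpha_n-\lceil\alpha_{n-1}\rceil)$-fold primitive of that continuous function; therefore $v\in C^{\alpha_n-\lceil\alpha_{n-1}\rceil}[0,I]$, with $v^{(\alpha_n-\lceil\alpha_{n-1}\rceil)}(t)=f(t,\cdots)$ and $v^{(i)}(0)=u_0^{(i+\lceil\alpha_{n-1}\rceil)}$ for $0\le i\le\alpha_n-\lceil\alpha_{n-1}\rceil-1$. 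Defining $u$ by the reconstruction formula then gives $u^{(\lceil\alpha_{n-1}\rceil)}=v\in C^{\alpha_n-\lceil\alpha_{n-1}\rceil}[0,I]$, hence $u\in C^{\alpha_n}[0,I]$; evaluating derivatives at $0$ gives $u^{(i)}(0)=u_0^{(i)}$ for $0\le i\le\alpha_n-1$ (this is where the hypothesis $\lceil\alpha_{n-1}\rceil+1\le\alpha_n$ is used, so that $v(0)=u_0^{(\lceil\alpha_{n-1}\rceil)}$ is among the matched values), i.e. (\ref{multi_term_condition}); and reversing the computation above yields $D^{\alpha_n}u=v^{(\alpha_n-\lceil\alpha_{n-1}\rceil)}=f(t,D^{\alpha_1}u,\cdots,D^{\alpha_{n-1}}u)$, i.e. (\ref{multi_term_equation}).

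I do not anticipate a real obstacle: the argument is a streamlined version of the proof of Theorem~\ref{th2-1}. The only points requiring mild care are the continuity up to $t=0$ of the weakly singular fractional integrals $J^{\lceil\alpha_{n-1}\rceil-\alpha_j}v$ sitting inside $f$, and the careful bookkeeping of integer-order derivatives needed to conclude that $v$ has exactly $\alpha_n-\lceil\alpha_{n-1}\rceil$ and $u$ exactly $\alpha_n$ continuous derivatives while all the prescribed initial values are matched. Because $\alpha_n$ is an integer, these reduce to classical facts about iterated primitives, so the delicate regularity analysis via Lemma~\ref{le2-2} that was indispensable in Theorem~\ref{th2-1} is simply not needed.
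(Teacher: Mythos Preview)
Your proposal is correct and takes essentially the same approach as the paper, whose proof of Theorem~\ref{th2-3} consists solely of the remark ``Similar to the proof of Theorem~\ref{th2-1}, we can prove this result.'' You in fact supply more than the paper does: you identify precisely why the extra hypotheses of Theorem~\ref{th2-1} (continuous differentiability of $f$, $f(0,0)=0$, $u_0^{(\lceil\alpha_{n-1}\rceil)}=0$) can be dropped here---namely, that $\alpha_n\in N$ makes $\alpha_n-\lceil\alpha_{n-1}\rceil$ a positive integer, so the outer kernel is polynomial and the regularity step that required Lemma~\ref{le2-2} in Theorem~\ref{th2-1} becomes the elementary fact that an iterated primitive of a continuous function is smooth.
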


 \begin{proof}
 Similar to the proof of Theorem \ref{th2-1}, we can prove this result.
 \end{proof}
 We can easily see that Lemma \ref{incorrect_lemma1} and Lemma \ref{incorrect_lemma2} are more general than Theorem \ref{th2-1} in the case $ n=2 $.  By making counterexamples, we show that Lemma \ref{incorrect_lemma1} and Lemma \ref{incorrect_lemma2} are incorrect.
 Firstly we present a counterexample of Lemma \ref{incorrect_lemma1}.
 Set $I=1, \alpha_1=1.8, \alpha_2=2.2, u_0^{(i)}=0,i=0,1,2 $ and
 \begin{equation}
 \nonumber
  f(t,s)= \frac{\Gamma(1.6)}{2\Gamma(1.4)}t^{0.4}+\frac{[\Gamma(1.6)\Gamma(1.8)]^{0.5}}{2\Gamma(1.4)}s^{0.5}.
 \end{equation}
 Then it is easy to see that $ v(t)=t^{0.6} $ is the solution of the equation(\ref{incorrect_lemma1_eq2}).
 By the equation (\ref{incorrect_lemma1_eq1}), we have
 \begin{equation}
 \nonumber
 u(t)=\frac{\Gamma(1.6)}{\Gamma(3.6)}t^{2.6}.
 \end{equation}
 It is clear that $ u \notin C^3[0,1]  $. Thus Lemma \ref{incorrect_lemma1} is incorrect.
 Secondly we give a counterexample of Lemma \ref{incorrect_lemma2}.
 Set $I=1,\alpha_1=1.4, \alpha_2=1.5, u_0^{(i)}=0,i=0,1 $ and
 \begin{equation}
 \nonumber
  f(t,s)= \frac{\Gamma(1.2)}{2\Gamma(1.1)}(t^{0.1}+s^{0.5}).
 \end{equation}
 Then we can easily see that $ v(t)=t^{0.2} $ is the solution of (\ref{incorrect_lemma2_eq2}).
 By (\ref{incorrect_lemma2_eq1}), we have
 \begin{equation}
 \nonumber
 u(t)=\frac{\Gamma(1.2)}{\Gamma(2.6)}t^{1.6}.
 \end{equation}
 It is evident that $ u \notin C^2[0,1]  $. Thus Lemma \ref{incorrect_lemma2} is incorrect.

%%%%%%%%%%%%%%%%%%%%%%%%%%%%%%%%%%%%%%%%%%%%%%%%%%%%%%%%%%%%%%%%%%%%%%%%%%%%%%%%%%%%%%%%%%%%%%%%%%%%%%%%%%%%%%%%%%%%%%%%%%%%%%%%%%%%

 \section{Existence of solutions}\label{sec:3}

 In this section the existence of smooth solutions of the two-term Caputo-type fractional differential equation (\ref{two_term_equation})-(\ref{two_term_condition}) is discussed.
 In order to avoid the repetition of the proof process of our theorems, we state the following lemma.
 \begin{lemma}\label{le3-1}
 Let $ I>0 $, $ B $  be a convex bounded closed subset of $ C[0,I] $ and $ T $ is defined by
 \begin{equation}
 \nonumber
 Tv(t)=P(t)+\int^t_0 \frac {{(t-s)}^{\beta-1}}{\Gamma(\beta)}f\biggl (s,\frac {1}{\Gamma(\gamma)}\int^s_0 {(s-w)}^{\gamma-1}v(w)dw \biggr)ds.
 \end{equation}
 where $ \beta, \gamma>0 $ and $ P(t):[0,I] \rightarrow R, f(t,s):[0,I] \times R \rightarrow R$ are continuous.
 If $ T(B) \subset B $,  then $ T $ has at least one fixed point in $ B $.
 \end{lemma}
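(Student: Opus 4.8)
The plan is to obtain the fixed point from Schauder's fixed point theorem applied to $T$ on $B$. Since $B$ is given to be convex, bounded and closed in $C[0,I]$ and since by hypothesis $T(B)\subset B$, the only remaining points are that $T$ is a continuous self-map of $B$ and that $T(B)$ is relatively compact; the Arzel\`a--Ascoli theorem reduces the latter to uniform boundedness and equicontinuity of $T(B)$.

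First I would check that $T$ maps $C[0,I]$ into itself. For $v\in C[0,I]$ the inner expression $s\mapsto\frac{1}{\Gamma(\gamma)}\int_0^s (s-w)^{\gamma-1}v(w)\,dw$ is the Riemann--Liouville integral $J^\gamma v$, which is continuous on $[0,I]$; composing with the continuous function $f$ shows that $s\mapsto f\bigl(s,J^\gamma v(s)\bigr)$ is continuous, and applying $J^\beta$ and adding the continuous term $P$ shows $Tv\in C[0,I]$. Continuity of $T$ then follows from the (locally uniform) continuity of $f$: if $v_k\to v$ uniformly on $[0,I]$, then $J^\gamma v_k\to J^\gamma v$ uniformly, hence $f(\cdot,J^\gamma v_k(\cdot))\to f(\cdot,J^\gamma v(\cdot))$ uniformly, and since $J^\beta$ is a bounded linear operator on $C[0,I]$ we get $Tv_k\to Tv$ uniformly.

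The heart of the argument is the equicontinuity of $T(B)$. Because $B$ is bounded there is an $M$ with $\|v\|_\infty\le M$ for all $v\in B$, whence $|J^\gamma v(s)|\le M I^\gamma/\Gamma(\gamma+1)$ for all $s\in[0,I]$ and all $v\in B$; thus the arguments fed into $f$ stay in a fixed compact rectangle, and the continuity of $f$ yields a bound $|f(s,J^\gamma v(s))|\le K$ uniform in $v\in B$. Writing $Tv(t)=P(t)+\frac{1}{\Gamma(\beta)}\int_0^t (t-s)^{\beta-1}h_v(s)\,ds$ with $\|h_v\|_\infty\le K$, I would estimate, for $0\le t_1<t_2\le I$,
\[
|Tv(t_2)-Tv(t_1)|\le |P(t_2)-P(t_1)|+\frac{K}{\Gamma(\beta)}\left(\int_{t_1}^{t_2}(t_2-s)^{\beta-1}\,ds+\int_0^{t_1}\bigl|(t_2-s)^{\beta-1}-(t_1-s)^{\beta-1}\bigr|\,ds\right),
\]
and since $P$ is uniformly continuous on $[0,I]$, a standard computation shows the right-hand side tends to $0$ as $t_2-t_1\to0$, uniformly in $v\in B$ (treating the cases $\beta\ge1$ and $0<\beta<1$ separately because of the weakly singular kernel). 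Together with the uniform bound $\|Tv\|_\infty\le\sup_{w\in B}\|w\|_\infty$ coming from $T(B)\subset B$, Arzel\`a--Ascoli gives that $T(B)$ is relatively compact in $C[0,I]$. Hence $T:B\to B$ is a continuous map of the convex closed bounded set $B$ with relatively compact image, and Schauder's fixed point theorem supplies a fixed point $v\in B$.

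The main obstacle is the equicontinuity estimate for the weakly singular operator $J^\beta$ when $0<\beta<1$, i.e. showing that $\int_0^{t_1}\bigl|(t_2-s)^{\beta-1}-(t_1-s)^{\beta-1}\bigr|\,ds\to0$ as $t_2-t_1\to0$; this is where the explicit form of the kernel is used, and it must be done uniformly over $v\in B$. Everything else is routine once the uniform bound $K$ on $f$ along the relevant compact set has been secured.
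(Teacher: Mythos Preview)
Your proposal is correct and follows essentially the same approach as the paper, which simply appeals to Schauder's fixed point theorem (citing Theorem~1.2 of \cite{Deng2}) without spelling out the details. Your verification of continuity of $T$ and relative compactness of $T(B)$ via the Arzel\`a--Ascoli theorem, together with the equicontinuity estimate for the weakly singular kernel $J^\beta$, is exactly what the cited argument consists of.
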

 \begin{proof}
 Similar to the proof of Theorem 1.2 in \cite{Deng2}, we can prove this result by using Schauder fixed point theorem.
 \end{proof}
 With the help of Lemma \ref{le3-1}, the initial value problem (\ref{two_term_equation})-(\ref{two_term_condition}) is reduced to the problem for finding a bounded, convex and closed subset $ B $ in $ Y $ such that $ T(B) \subset B $.
 \begin{theorem}\label{th3-1}
 Let $ k,I>0 $ and suppose that the hypotheses of Theorem \ref{th2-1} hold.
 Define
 \begin{equation}
 \nonumber
 G=\Biggl \{(t,v):t \in [0,I],|v| \leq \frac{I^{\lceil \alpha_1 \rceil-\alpha_1}}{\Gamma(\lceil \alpha_1 \rceil-\alpha_1+1)} \Biggl ( k + \sum^{\lceil \alpha_2 \rceil-\lceil \alpha_1 \rceil-1}_{i=0}\frac{I^i}{i!}\biggl |u^{(i+\lceil \alpha_1 \rceil)}_0 \biggr | \Biggr )   \Biggr \}
 \end{equation}
 and $ M:=sup_{(t,v) \in G}f(t,v) $.
 Then the initial value problem (\ref{two_term_equation})-(\ref{two_term_condition}) has at least one solution in $ C^{\lceil \alpha_2 \rceil}[0,h] $,
 where $ h $ is defined by
 \begin{equation}
 \nonumber
 h:=\left\{
    \begin{aligned}
            & I && \text{if $ M=0 $} \\
            & min \biggl \{ I,\bigl( {k \Gamma(\lceil \alpha_2 \rceil-\lceil \alpha_1 \rceil + 1)}/{M} \bigr) ^{\frac{1}{\lceil \alpha_2 \rceil-\lceil \alpha_1 \rceil}} \biggr \}&& \text{else}.\\
    \end{aligned}
    \right.
 \end{equation}
 \end{theorem}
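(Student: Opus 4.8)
The plan is to recognise the integral equation \eqref{th2-1_eq2} of Theorem~\ref{th2-1} in the case $n=2$ as the fixed-point equation $v=Tv$ for exactly the operator $T$ of Lemma~\ref{le3-1}, to produce a fixed point by checking the single hypothesis $T(B)\subset B$ of that lemma on a suitable ball $B\subset C[0,h]$, and then to pull the fixed point $v$ back to a solution $u$ of \eqref{two_term_equation}--\eqref{two_term_condition} through the representation \eqref{th2-1_eq1}. Accordingly I set $\beta:=\alpha_2-\lceil\alpha_1\rceil$ and $\gamma:=\lceil\alpha_1\rceil-\alpha_1$; the standing assumptions (those of Theorem~\ref{th2-1}) give $\beta>1$, and $\alpha_1\notin N$ gives $\gamma\in(0,1)$, so the inner kernel $(s-w)^{\gamma-1}$ is weakly singular and integrable. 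With $P(t):=\sum_{i=0}^{\lceil\alpha_2\rceil-\lceil\alpha_1\rceil-1}\frac{t^i}{i!}u_0^{(i+\lceil\alpha_1\rceil)}$ (a polynomial, hence continuous) and $f$ continuous, equation \eqref{th2-1_eq2} reads $v=Tv$ in the notation of Lemma~\ref{le3-1}. Since Theorem~\ref{th2-1} and Lemma~\ref{le3-1} both remain valid with $I$ replaced by any $h\in(0,I]$, it then suffices to find $v\in C[0,h]$ with $Tv=v$: the $u$ built from such a $v$ via \eqref{th2-1_eq1} automatically lies in $C^{\lceil\alpha_2\rceil}[0,h]$ and solves \eqref{two_term_equation}--\eqref{two_term_condition}.

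Next I would fix the ball. Put
\[
\rho:=k+\sum_{i=0}^{\lceil\alpha_2\rceil-\lceil\alpha_1\rceil-1}\frac{I^i}{i!}\bigl|u_0^{(i+\lceil\alpha_1\rceil)}\bigr|,
\qquad
B:=\{\,v\in C[0,h]:\ \|v\|_{C[0,h]}\le\rho\,\},
\]
which is convex, bounded and closed. The choice of $\rho$ is dictated by the inner estimate: for $v\in B$ and $0\le s\le h\le I$,
\begin{align*}
\Bigl|\frac{1}{\Gamma(\gamma)}\int_0^s (s-w)^{\gamma-1}v(w)\,dw\Bigr|
&\le\frac{\rho\,s^{\gamma}}{\Gamma(\gamma+1)}\le\frac{\rho\,I^{\gamma}}{\Gamma(\gamma+1)}\\
&=\frac{I^{\lceil\alpha_1\rceil-\alpha_1}}{\Gamma(\lceil\alpha_1\rceil-\alpha_1+1)}\Bigl(k+\sum_{i=0}^{\lceil\alpha_2\rceil-\lceil\alpha_1\rceil-1}\frac{I^i}{i!}\bigl|u_0^{(i+\lceil\alpha_1\rceil)}\bigr|\Bigr),
\end{align*}
which is precisely the bound defining $G$; hence the argument of $f$ inside $Tv$ never leaves $G$, so $|f|\le M$ along the way.

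Then the outer estimate closes the invariance. For $v\in B$ and $t\in[0,h]$,
\begin{align*}
|Tv(t)|
&\le|P(t)|+\frac{M}{\Gamma(\beta)}\int_0^t (t-s)^{\beta-1}\,ds\\
&\le\sum_{i=0}^{\lceil\alpha_2\rceil-\lceil\alpha_1\rceil-1}\frac{I^i}{i!}\bigl|u_0^{(i+\lceil\alpha_1\rceil)}\bigr|+\frac{M\,h^{\beta}}{\Gamma(\beta+1)}.
\end{align*}
If $M=0$ (so $h=I$) the last term vanishes and $|Tv(t)|\le\rho$; if $M>0$, the prescribed cut-off $h$ is chosen so that $h^{\beta}\le k\,\Gamma(\beta+1)/M$, whence $M h^{\beta}/\Gamma(\beta+1)\le k$ and again $|Tv(t)|\le\rho$. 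In either case $T(B)\subset B$, so Lemma~\ref{le3-1} furnishes a fixed point $v\in B$, and the function $u$ defined from $v$ by \eqref{th2-1_eq1} is the desired solution in $C^{\lceil\alpha_2\rceil}[0,h]$.

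I expect no essential obstacle: Lemma~\ref{le3-1} already absorbs the equicontinuity and compactness needed for Schauder's theorem, so the whole argument reduces to the two displayed estimates together with the bookkeeping that matches $\rho$ (keeping the argument of $f$ inside $G$) with the cut-off time $h$ (keeping the Riemann--Liouville integral of the constant $M$ below $k$). The only points demanding care are (i) that the order governing the $t$-integration, and hence the threshold on $h$, is $\beta=\alpha_2-\lceil\alpha_1\rceil$ — it is this quantity that enters $\int_0^t(t-s)^{\beta-1}\,ds=t^{\beta}/\beta$ — and (ii) that $\gamma\in(0,1)$, which is exactly what keeps the inner integral finite and makes the first estimate legitimate.
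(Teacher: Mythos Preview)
Your argument is essentially the paper's: identify \eqref{th2-1_eq2} (for $n=2$) as $v=Tv$, invoke Lemma~\ref{le3-1} on a closed ball $B\subset C[0,h]$, and verify $T(B)\subset B$ via the same inner estimate (forcing the argument of $f$ into $G$) and the same outer estimate $Mh^{\alpha_2-\lceil\alpha_1\rceil}/\Gamma(\alpha_2-\lceil\alpha_1\rceil+1)\le k$. The only cosmetic difference is that the paper takes $B=\{v\in C[0,h]:\|v-P\|\le k\}$, a ball centred at $P$ of radius $k$, whereas you take the ball centred at $0$ of radius $\rho=k+\sum_{i}\frac{I^i}{i!}\bigl|u_0^{(i+\lceil\alpha_1\rceil)}\bigr|$; either choice gives the required invariance.
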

 \begin{proof}
 We introduce the function $ P $ and the set $ B $ defined by
 \begin{equation}
 \nonumber
 P(t):=\sum^{\lceil \alpha_2 \rceil-\lceil \alpha_1 \rceil-1}_{i=0}\frac{t^i}{i!}u^{(i+\lceil \alpha_1 \rceil)}_0
 \end{equation}
 and $ B:=\{ v \in C[0,h]:\parallel P - v \parallel \leq k \} $, where $ \|\cdot\| $ is the Chebyshev norm.
 In order to prove our desired result, by Lemma \ref{le3-1} and Theorem \ref{th2-1}, we need to prove that $ T(B) \subset B $ where $ T $ is defined by
 \begin{align}
 \nonumber
 Tv(t)& =\sum^{\lceil \alpha_2 \rceil-\lceil \alpha_1 \rceil-1}_{i=0}\frac{t^i}{i!}u^{(i+\lceil \alpha_1 \rceil)}_0+\int^t_0 \frac {{(t-s)}^{ \alpha_2-\lceil \alpha_1 \rceil-1}}{\Gamma(\alpha_2-\lceil \alpha_1 \rceil)}f\biggl (s,\frac {1}{\Gamma(\lceil \alpha_1 \rceil-\alpha_1)}\\
 \nonumber
 &\int^s_0 {(s-w)}^{\lceil \alpha_1 \rceil-\alpha_1-1}v(w)dw \biggr)ds.
 \nonumber
 \end{align}
 For $ v \in B $ and $ t \in [0,h] $, we obtain
 \begin{equation}
 \nonumber
  | v(t) |\leq \parallel v\parallel  \leq k + \parallel P \parallel \leq k + \sum^{\lceil \alpha_2 \rceil-\lceil \alpha_1 \rceil-1}_{i=0}\frac{I^i}{i!}\biggl |u^{(i+\lceil \alpha_1 \rceil)}_0 \biggr |,
 \end{equation}
 so we have
 \begin{equation}
 \nonumber
 \int^t_0 \frac {{(t-s)}^{\lceil \alpha_1 \rceil-\alpha_1-1}|v(s)|}{\Gamma(\lceil \alpha_1 \rceil-\alpha_1)}ds \leq \frac{I^{\lceil \alpha_1 \rceil-\alpha_1}}{\Gamma(\lceil \alpha_1 \rceil-\alpha_1+1)} \Biggl ( k + \sum^{\lceil \alpha_2 \rceil-\lceil \alpha_1 \rceil-1}_{i=0}\frac{I^i}{i!}\biggl |u^{(i+\lceil \alpha_1 \rceil)}_0 \biggr | \Biggr )
 \end{equation}
 \begin{align}
 \nonumber
 |Tv(t)-P(t)| &\leq \int^t_0 \frac {{(t-s)}^{ \alpha_2-\lceil \alpha_1 \rceil-1}}{\Gamma(\alpha_2-\lceil \alpha_1 \rceil)} \Biggl | f\biggl (s,\int^s_0 \frac {{(s-w)}^{\lceil \alpha_1 \rceil-\alpha_1-1}v(w)}{\Gamma(\lceil \alpha_1 \rceil-\alpha_1)}dw \biggr) \Biggr | ds\\
 \nonumber
 &\leq \int^t_0 \frac {{M(t-s)}^{ \alpha_2-\lceil \alpha_1 \rceil-1}}{\Gamma(\alpha_2-\lceil \alpha_1 \rceil)} \leq \frac {{Mh}^{ \alpha_2-\lceil \alpha_1 \rceil}}{\Gamma(\alpha_2-\lceil \alpha_1 \rceil+1)} \leq k,
 \end{align}
 ,which implies that $ T(B) \subset B $.
 \end{proof}

 \begin{theorem}\label{th3-2}
 Let $ k>0 $ and suppose that the hypotheses of Theorem \ref{th2-2} hold.
 Define
 \begin{equation}
 \nonumber
 G:=\bigg \{(t,v):t \in [0,I],|v|\leq  k+\sum^{\lceil \alpha_2 \rceil- \alpha_1-1}_{i=0} \frac{I^i}{i!}\bigg | u^{(i+ \alpha_1 )}_0 \bigg |  \bigg \}
 \end{equation}
 and $ M:=sup_{(t,v) \in G}f(t,v) $.
 Then the initial value problem (\ref{two_term_equation})-(\ref{two_term_condition}) has at least one solution in $C^{\lceil \alpha_2 \rceil}[0,h] $,
 where $ h $ is defined by
 \begin{equation}
 \nonumber
 h:=\left\{
    \begin{aligned}
            & I && \text{if $ M=0 $} \\
            & min \biggl \{ I,\bigl( {k \Gamma(\lceil \alpha_2 \rceil- \alpha_1 + 1)}/{M} \bigr) ^{\frac{1}{\lceil \alpha_2 \rceil- \alpha_1 }} \biggr \}&& \text{else}.\\
    \end{aligned}
    \right.
 \end{equation}
 \end{theorem}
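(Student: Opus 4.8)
The plan is to imitate the proof of Theorem~\ref{th3-1} almost verbatim, using the equivalence of Theorem~\ref{th2-2} in the two–term case $n=2$ in place of that of Theorem~\ref{th2-1}. Under the hypotheses of Theorem~\ref{th2-2} we have $\alpha_1\in N$, $\alpha_2\notin N$, $\alpha_1+1<\alpha_2$, $f\in C^1$ and the compatibility condition $f(0,u_0^{(\alpha_1)})=0$; by that theorem, a function $u\in C^{\lceil\alpha_2\rceil}[0,h]$ solves (\ref{two_term_equation})--(\ref{two_term_condition}) on $[0,h]$ if and only if $u(t)=\sum_{i=0}^{\alpha_1-1}\frac{t^i}{i!}u_0^{(i)}+\int_0^t\frac{(t-s)^{\alpha_1-1}}{(\alpha_1-1)!}v(s)\,ds$ for some $v\in C[0,h]$ with $v=Tv$, where, writing $P(t):=\sum_{i=0}^{\lceil\alpha_2\rceil-\alpha_1-1}\frac{t^i}{i!}u_0^{(i+\alpha_1)}$,
\begin{equation}
\nonumber
Tv(t)=P(t)+\int_0^t\frac{(t-s)^{\alpha_2-\alpha_1-1}}{\Gamma(\alpha_2-\alpha_1)}f\bigl(s,v(s)\bigr)\,ds .
\end{equation}
Because $\alpha_1$ is an integer, $D^{\alpha_1}u=u^{(\alpha_1)}=v$, so the inner Riemann--Liouville integral appearing in Lemma~\ref{le3-1} degenerates to the identity and $f$ is composed directly with $v(s)$; nonetheless the Schauder--fixed--point argument behind Lemma~\ref{le3-1} --- convexity, boundedness and closedness of $B$, continuity of $T$, and relative compactness of $T(B)$ via a standard equicontinuity (Arzela--Ascoli) estimate coming from the integral term --- applies without change. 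So it suffices to exhibit a convex, bounded, closed set $B\subset C[0,h]$ with $T(B)\subset B$.

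Following Theorem~\ref{th3-1}, I would take $B:=\{v\in C[0,h]:\|P-v\|\le k\}$ with $\|\cdot\|$ the Chebyshev norm on $[0,h]$, which is convex, bounded and closed. For $v\in B$ and $s\in[0,h]\subseteq[0,I]$,
\begin{equation}
\nonumber
|v(s)|\le\|v\|\le\|P\|+k\le k+\sum_{i=0}^{\lceil\alpha_2\rceil-\alpha_1-1}\frac{I^i}{i!}\bigl|u_0^{(i+\alpha_1)}\bigr| ,
\end{equation}
hence $(s,v(s))\in G$ and $|f(s,v(s))|\le M$. Therefore, for $t\in[0,h]$,
\begin{equation}
\nonumber
|Tv(t)-P(t)|\le\int_0^t\frac{(t-s)^{\alpha_2-\alpha_1-1}}{\Gamma(\alpha_2-\alpha_1)}\,|f(s,v(s))|\,ds\le\frac{M\,h^{\alpha_2-\alpha_1}}{\Gamma(\alpha_2-\alpha_1+1)}\le k ,
\end{equation}
the last inequality being trivial when $M=0$ (then $h=I$) and following from the definition of $h$ otherwise. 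This gives $T(B)\subset B$, so Lemma~\ref{le3-1} provides a fixed point $v\in B$ of $T$; the corresponding $u$ solves (\ref{two_term_equation})--(\ref{two_term_condition}) and lies in $C^{\lceil\alpha_2\rceil}[0,h]$ by Theorem~\ref{th2-2}.

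I do not expect a genuine obstacle: the argument is a strictly simpler version of that of Theorem~\ref{th3-1}, since here no fractional integration of $v$ intervenes. The two points deserving a little attention are, first, confirming that the $\alpha_1\in N$ specialization of Theorem~\ref{th2-2} really places $v(s)$ (not a fractional integral of $v$) as the argument of $f$ --- which is exactly why the set $G$ in the statement uses the plain bound on $|v|$ rather than the rescaled bound of Theorem~\ref{th3-1} --- and, second, the bookkeeping in the definition of $h$, namely the degenerate case $M=0$ and the fact that $h=\min\{\ldots\}\le I$ keeps $[0,h]\subseteq[0,I]$, so that $G$ remains the relevant region for the estimate. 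The functional-analytic core (continuity and compactness needed for Schauder) is already absorbed into Lemma~\ref{le3-1} and need not be redone.
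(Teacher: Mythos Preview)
Your approach is correct and is essentially the same as the paper's: the paper's proof is the single line ``Similar to the proof of Theorem 6.1 in \cite{DieBoo}, we can prove this result,'' and Theorem~6.1 of \cite{DieBoo} is exactly the Schauder fixed-point argument on a Volterra equation of the form $v(t)=P(t)+\int_0^t\frac{(t-s)^{\beta-1}}{\Gamma(\beta)}f(s,v(s))\,ds$ that you have spelled out by following Theorem~\ref{th3-1}. Your remark that $\alpha_1\in N$ collapses the inner fractional integral so that $f$ is composed directly with $v$ is precisely why the paper can defer to the single-term theory of \cite{DieBoo} instead of rerunning the proof of Theorem~\ref{th3-1}.
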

 \begin{proof}
 Similar to the proof of Theorem 6.1 in \cite{DieBoo}, we can prove this result.
 \end{proof}
 \begin{theorem}\label{th3-3}
 Let $ k>0 $ and suppose that the hypotheses of Theorem \ref{th2-3} hold.
 Define
 \begin{equation}
 \nonumber
 G=\Biggl \{(t,v):t \in [0,I],|v| \leq \frac{I^{\lceil \alpha_1 \rceil-\alpha_1}}{\Gamma(\lceil \alpha_1 \rceil-\alpha_1+1)} \Biggl ( k + \sum^{ \alpha_2 -\lceil \alpha_1 \rceil-1}_{i=0}\frac{I^i}{i!}\biggl |u^{(i+\lceil \alpha_1 \rceil)}_0 \biggr | \Biggr )   \Biggr \}
 \end{equation}
 and $ M:=sup_{(t,v) \in G}f(t,v) $.
 Then the initial value problem (\ref{two_term_equation})-(\ref{two_term_condition}) has at least one solution in $ C^{\lceil \alpha_2 \rceil}[0,h] $,
 where $ h $ is defined by
 \begin{equation}
 \nonumber
 h:=\left\{
    \begin{aligned}
            & I && \text{if $ M=0 $} \\
            & min \biggl \{ I,\bigl( {k \Gamma( \alpha_2 -\lceil \alpha_1 \rceil + 1)}/{M} \bigr) ^{\frac{1}{ \alpha_2 -\lceil \alpha_1 \rceil}} \biggr \}&& \text{else}.\\
    \end{aligned}
    \right.
 \end{equation}
 \end{theorem}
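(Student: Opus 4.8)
The plan is to mirror, almost verbatim, the argument used for Theorem \ref{th3-1}, combining the equivalence result of Theorem \ref{th2-3} with the Schauder-type fixed point statement of Lemma \ref{le3-1}. First I would introduce
\[
P(t):=\sum_{i=0}^{\alpha_2-\lceil\alpha_1\rceil-1}\frac{t^i}{i!}u_0^{(i+\lceil\alpha_1\rceil)},\qquad B:=\{v\in C[0,h]:\|P-v\|\le k\},
\]
which is a nonempty (it contains $P$), convex, bounded, closed subset of $C[0,h]$. Since for $n=2$ the integral equation of Theorem \ref{th2-3} reads $v=Tv$ with
\[
Tv(t)=P(t)+\int_0^t\frac{(t-s)^{\alpha_2-\lceil\alpha_1\rceil-1}}{\Gamma(\alpha_2-\lceil\alpha_1\rceil)}f\Bigl(s,\frac{1}{\Gamma(\lceil\alpha_1\rceil-\alpha_1)}\int_0^s(s-w)^{\lceil\alpha_1\rceil-\alpha_1-1}v(w)\,dw\Bigr)ds,
\]
and since the exponents are admissible for Lemma \ref{le3-1} (namely $\beta=\alpha_2-\lceil\alpha_1\rceil\ge1>0$ because $\lceil\alpha_1\rceil+1\le\alpha_2$, and $\gamma=\lceil\alpha_1\rceil-\alpha_1\in(0,1)$ because $\alpha_1\notin N$), it suffices to verify $T(B)\subset B$; Lemma \ref{le3-1} then produces a fixed point $v\in B\subset C[0,h]$, and Theorem \ref{th2-3} applied on $[0,h]$ turns this $v$ into the desired $u\in C^{\alpha_2}[0,h]$ solving \eqref{two_term_equation}--\eqref{two_term_condition}.

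For the inclusion $T(B)\subset B$ I would run the same chain of estimates as in Theorem \ref{th3-1}. Take $v\in B$ and $t\in[0,h]$. Then $|v(t)|\le\|v\|\le k+\|P\|\le k+\sum_{i=0}^{\alpha_2-\lceil\alpha_1\rceil-1}\frac{I^i}{i!}|u_0^{(i+\lceil\alpha_1\rceil)}|$, so the inner Riemann--Liouville integral is bounded by $\frac{I^{\lceil\alpha_1\rceil-\alpha_1}}{\Gamma(\lceil\alpha_1\rceil-\alpha_1+1)}\bigl(k+\sum_{i=0}^{\alpha_2-\lceil\alpha_1\rceil-1}\frac{I^i}{i!}|u_0^{(i+\lceil\alpha_1\rceil)}|\bigr)$, which means the argument of $f$ lies in $G$ and hence $|f(\cdot)|\le M$ throughout the integration. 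Consequently
\[
|Tv(t)-P(t)|\le\int_0^t\frac{M(t-s)^{\alpha_2-\lceil\alpha_1\rceil-1}}{\Gamma(\alpha_2-\lceil\alpha_1\rceil)}\,ds=\frac{Mt^{\alpha_2-\lceil\alpha_1\rceil}}{\Gamma(\alpha_2-\lceil\alpha_1\rceil+1)}\le\frac{Mh^{\alpha_2-\lceil\alpha_1\rceil}}{\Gamma(\alpha_2-\lceil\alpha_1\rceil+1)}\le k,
\]
where the last inequality holds trivially if $M=0$ and by the explicit formula for $h$ otherwise. This gives $T(B)\subset B$ and finishes the reduction.

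I do not anticipate a genuine obstacle here: unlike Theorems \ref{th2-1}--\ref{th2-3}, where the smoothness bookkeeping across the iterated fractional integrals is the delicate point, the present case $\alpha_2\in N$ imposes no smoothness condition on $f$ beyond continuity, so the argument is purely a fixed-point estimate. The only things to be careful about are the exponent arithmetic — here the order of the outer integral is $\alpha_2-\lceil\alpha_1\rceil$ (not $\lceil\alpha_2\rceil-\lceil\alpha_1\rceil$ as in Theorem \ref{th3-1}) and $\lceil\alpha_2\rceil=\alpha_2$ — and the fact that $h\le I$ must be invoked to pass from the bound obtained on $[0,h]$ to membership in $B$. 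One should also remark, though it is immediate, that the hypotheses of Theorem \ref{th2-3} are inherited on the subinterval $[0,h]$, so the equivalence theorem is legitimately applicable there.
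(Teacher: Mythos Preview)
Your proposal is correct and is exactly the route the paper intends: its entire proof of Theorem~\ref{th3-3} reads ``Similar to the proof of Theorem~\ref{th3-1}, we can prove this result,'' and you have carried out precisely that transposition, with the right exponent adjustments ($\alpha_2\in N$ so $\lceil\alpha_2\rceil=\alpha_2$) and the correct appeal to Theorem~\ref{th2-3} in place of Theorem~\ref{th2-1}.
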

 \begin{proof}
 Similar to the proof of Theorem \ref{th3-1}, we can prove this result.
 \end{proof}

 \begin{remark}
 The existence of global solutions to the initial value problem (\ref{two_term_equation})-(\ref{two_term_condition}) can be obtained by
 \cite{Sin2}.
 \end{remark}

\begin{remark}
 The existence results for the two-term fractional differential equation (\ref{two_term_equation})-(\ref{two_term_condition}) presented in this section can be easily generalized to the multi-term fractional differential equation (\ref{multi_term_equation})-(\ref{multi_term_condition}).
 \end{remark}

%%%%%%%%%%%%%%%%%%%%%%%%%%%%%%%%%%%%%%%%%%%%%%%%%%%%%%%%%%%%%%%%%%%%%%%%%%%%%%%%%%%%%%%%%%%%%%%%%%%%%%%%%%%%%%%%%%%%%%%%%%%

\section*{References}
%    Insert the bibliography data here.

\end{document}